\numberwithin{equation}{section}
\theoremstyle{plain}
\newtheorem{theorem}[equation]{Theorem}
\newtheorem{lemma}[equation]{Lemma}
\newtheorem{proposition}[equation]{Proposition}
\newtheorem{corollary}[equation]{Corollary}
\theoremstyle{definition}
\newtheorem{definition}[equation]{Definition}
\newtheorem{variant}[equation]{Variant}
\newtheorem{example}[equation]{Example}
\newtheorem{remark}[equation]{Remark}
\newtheoremstyle{TheoremNum} 
        {\topsep}{\topsep}              
        {\itshape}                      
        {}                              
        {\bfseries}                     
        {.}                             
        { }                             
        {\thmname{#1}\thmnote{ \bfseries #3}}
    \theoremstyle{TheoremNum}
\newcommand{\field}[1]{\mathbb{#1}}
\newcommand{\ol}[1]{\overline{#1}}
\newcommand{\cat}[1]{
\StrLen{#1}[\mystrlen]
\ifnum\mystrlen=1 \mathbb{#1}
\else \mathrm{#1}
\fi}
\newcommand{\ope}[1]{
\StrLen{#1}[\mystrlengt]
\ifnum\mystrlengt=1 \mathscr{#1}
\else \mathrm{#1}
\fi
}
\newcommand{\Set}[0]{\cat{Set}}
\newcommand{\sSet}[0]{\cat{sSet}}
\newcommand{\colim}{\operatornamewithlimits{\mathrm{colim}}}
\newcommand{\hocolim}{\operatornamewithlimits{\mathrm{hocolim}}}
\newcommand{\dau}[0]{\partial}
\newcommand{\mb}[1]{\mathbf{#1}}
\newcommand{\ho}[0]{\mathrm{Ho}}
\newcommand{\mm}[1]{\mathrm{#1}}
\newcommand{\Hom}[0]{\mm{Hom}}
\newcommand{\rt}[0]{\rightarrow}
\newcommand{\lt}[0]{\leftarrow}
\newcommand{\Del}[0]{\mb{\Delta}}
\newcommand{\op}[0]{\mm{op}}
\DeclareMathSymbol{\widetildesym}{\mathord}{largesymbols}{"65}
\DeclareMathSymbol{\widehatsym}{\mathord}{largesymbols}{"62}
\colorlet{RED}{red}
\title{An extension of Quillen's Theorem B}
\author{Ieke Moerdijk}
\email{i.moerdijk@uu.nl \and j.j.nuiten@uu.nl}
\author{Joost Nuiten}
\address{Mathematical Institute\\ Utrecht University\\ P.O. Box 80010\\ 3508 TA Utrecht\\ The
Netherlands.}
\begin{document}
\begin{abstract}
We prove a general version of Quillen's Theorem B, for actions of
simplicial categories, in an arbitrary left Bousfield localization of
the homotopy theory of simplicial presheaves over a site.
As special cases, we recover a version of the group completion theorem
in this general context, as well a version of Puppe's theorem on the
stability of homotopy colimits in an $\infty$-topos, due to Rezk.
\end{abstract}
\keywords{Quillen's Theorem B, group completion, simplicial presheaf, Nisnevich site, Bousfield localization, Rezk descent}
\maketitle


\section{Introduction}
Theorem B is one of the first results in Quillen's influential paper `Higher K-Theory I' \cite{qui73} and as such plays an important role in the foundations of algebraic K-theory. For a functor $f\colon \cat{D}\rt \cat{C}$ between small categories, this theorem provides a way to identify the homotopy fibre of the induced map $B\cat{D}\rt B\cat{C}$ between classifying spaces: it is the classifying space of the over-category $\cat{D}/x$, provided that for each morphism $x\rt y$ in $\cat{C}$, the functor $\cat{D}/x \rt \cat{D}/y$ induces a weak equivalence between the associated classifying spaces. This condition can also be phrased by saying that the classifying spaces of these various $\cat{D}/x$ form a diagram of spaces over $\cat{C}$, on which $\cat{C}$ acts by weak equivalences. From this point of view, the theorem is very close to other results in the literature, such as Volker Puppe's theorem \cite{pup74} on homotopy colimits of homotopy cartesian diagrams. A version of this theorem also holds for actions by homology equivalences, and this version yields the group completion theorem \cite{mcd75,qui71,moe89} and Bott periodicity \cite{har80}.

These results all predate the development of Quillen model categories and their left Bousfield localizations, the homotopy theory of simplicial presheaves and sheaves, and the theory of $\infty$-categories and $\infty$-toposes. The purpose of this paper is to reconsider Quillen's Theorem B in the light of these developments. We will prove a very general version of Theorem B over an arbitrary site, for actions of a presheaf of simplicial categories on another simplicial presheaf. This general theorem states that if the action is by weak equivalences in some further left Bousfield localization of one of the standard model structures on simplicial presheaves, then the fibre and the homotopy fibre of the action become equivalent in this localization. See Theorem \ref{thm:quillenb} below for a precise formulation.

This theorem has the expected applications, such as a version of the group completion theorem for actions of presheaves of simplicial monoids (such as the classifying space of a coproduct $\coprod_n BGL_n(R)$ for a sheaf of rings $R$) (Examples \ref{ex:groupcompletion} and \ref{ex:bglr} below) and a version of Puppe's theorem for homotopy cartesian morphisms between diagrams of simplicial presheaves over a site (Example \ref{ex:puppe} below). When applied to a left exact localization of simplicial presheaves, this result gives precisely what is sometimes referred to as (Rezk) descent for $\infty$-toposes \cite{rez98}.
When $R$ is a sheaf of commutative rings on a site, the theorem shows that the associated projective space $\field{P}^\infty$ is $\field{A}^1$-homotopy equivalent to its group completion $\Omega B(\field{P}^\infty)$ (Example \ref{ex:pinfinity}). We expect our general version of Quillen's theorem to have further applications when applied to specific sites such as the Nisnevich topology for $\field{A}^1$-homotopy theory \cite{mor99}.

The plan of this short paper is as follows. In Sections \ref{sec:simpsh} and \ref{sec:bisimpsh} we review the homotopy theory of simplicial and bisimplicial presheaves and sheaves. This material is largely standard, and can be found in many sources of which we will mention the main ones. In Section \ref{sec:act} we introduce the necessary notation and terminology for actions by categories on simplicial presheaves, so as to state and prove the main theorem in Section \ref{sec:mainthm}. Our proof closely follows the strategy of \cite{moe89}. We conclude with some applications in Section \ref{sec:ex}.

\section{Simplicial presheaves and sheaves}\label{sec:simpsh}
In this section we review some basic definitions and facts about the homotopy theory of simplicial presheaves and sheaves. Almost everything in this section traces back to \cite{bro73, jar87, joy84}.

Let $(\cat{S}, J)$ be a site, i.e.\ a small category $\cat{S}$ equipped with a Grothendieck topology $J$. Let $\mm{PSh}(\cat{S})$ and $\mm{Sh}(\cat{S}, J)$ be the categories of presheaves (resp.\ sheaves) of \emph{sets} on $S$ and let
\begin{equation}\label{diag:assocsheaf}
\vcenter{\xymatrix{
i^*\colon \mm{PSh}(\cat{S})\ar@<1ex>[r] & \mm{Sh}(\cat{S}, J)\ar@<1ex>[l]\colon i_*
}}\end{equation}
be the adjoint pair given by the full embedding $i_*$ and the associated sheaf functor $i^*$. By adjointness $i_*$ preserves all limits and $i^*$ preserves all colimits, while in addition $i^*$ preserves finite limits.

A \emph{point} of the topos $\mm{Sh}(\cat{S}, J)$ (or ``of the site $(\cat{S}, J)$'') is such an adjoint pair
$$\xymatrix{
p^*\colon \mm{Sh}(\cat{S}, J)\ar@<1ex>[r] & \cat{Sets}\ar@<1ex>[l]\colon p_*
}$$
for which $p^*$ preserves finite limits (i.e.\ the pair forms a geometric morphism $p\colon \cat{Sets}\rt \mm{Sh}(\cat{S}, J)$). The topos $\mm{Sh}(\cat{S}, J)$ is said to have \emph{enough points} if the collection of functors $p^*$, for all points $p$ of $\mm{Sh}(\cat{S}, J)$, is jointly conservative (i.e.\ detects isomorphisms). Equivalently, $\mm{Sh}(\cat{S}, J)$ has enough points if there exists a topological space $X$ and a geometric morphism $f\colon \mm{Sh}(X)\rt \mm{Sh}(\cat{S}, J)$ for which $f^*$ is conservative (i.e.\ ``$f$ is surjective''). Many sites occurring in nature have enough points \cite{sga4, mak77} and in some definitions and arguments we will assume that there are enough points, in order to help develop some intuition and to connect to the classical homotopy theory of simplicial sets. However, this assumption is never essential and can be circumvented by either working with a surjective ``Boolean point'' or by using the internal logic of $\mm{Sh}(\cat{S}, J)$.

The adjoint pair \eqref{diag:assocsheaf} induces an adjoint pair
$$\vcenter{\xymatrix{
i^*\colon \mm{sPSh}(\cat{S})\ar@<1ex>[r] & \mm{sSh}(\cat{S}, J)\ar@<1ex>[l]\colon i_*
}}$$
between the categories of \emph{simplicial presheaves} and \emph{sheaves}. The category $\mm{sPSh}(\cat{S})$ can be endowed with the \emph{projective model structure}, for which the fibrations and weak equivalences are defined levelwise: a map $Y\rt X$ of simplicial presheaves on $\cat{S}$ is a fibration or weak equivalence if for each object $S\in \cat{S}$, the map $Y(S)\rt X(S)$ is a fibration or weak equivalence in the classical Kan-Quillen model structure on simplicial sets. All model categorical notions for presheaves will refer to the projective model structure, unless stated otherwise.

The category $\mm{sSh}(\cat{S}, J)$ carries the \emph{Joyal} or \emph{injective} model structure, for which the cofibrations are the monomorphisms and the weak equivalences are the so-called \emph{local weak equivalences}: a map $Y\rt X$ of simplicial sheaves is called a local weak equivalence iff the map $p^*Y\rt p^*X$ is a weak equivalence of simplicial sets for every point $p$. There are rather few fibrations in this model structure, but there is a wider class of so-called \emph{local fibrations}, viz.\ the maps $Y\rt X$ for which each $p^*Y\rt p^*X$ is a Kan fibration. Equivalently, these are the maps for which the map
$$\xymatrix{
Y(\Delta[n])\ar[r] & X(\Delta[n])\times_{X(\Lambda^k[n])} X(\Lambda^k[n])
}$$
is a surjection of sheaves of sets, for each $n\geq 1$ and each $0\leq k\leq n$. Here we use that each simplicial set $K$ and simplicial sheaf $Y$ determine a sheaf (of sets) $Y(K)$, determined by
\begin{align*}
Y(\Delta[n]) &= Y_n\\
Y(\colim K_i) &= \lim Y(K_i).
\end{align*}
Alternatively, using that the Joyal model structure is a simplicial model structure, one can identify $Y(K)$ with the sheaf of vertices of $Y^K$. Similarly, a \emph{local trivial fibration} is a map $Y\rt X$ for which each $p^*Y\rt p^*X$ is a trivial fibration of simplicial sets, or equivalently, for which each map
$$\xymatrix{
Y(\Delta[n])\ar[r] & X(\Delta[n])\times_{X(\dau\Delta[n])} Y(\dau\Delta[n]) & & n\geq 0
}$$
is a surjection of sheaves of sets.

One easily verifies that the adjoint pair
$$\xymatrix{
i^*\colon \mm{sPSh}(\cat{S})\ar@<1ex>[r] & \mm{sSh}(\cat{S}, J)\ar@<1ex>[l]\colon i_*
}$$
is a Quillen pair. Among general Quillen pairs, it has some additional properties that are useful to keep in mind:
\begin{itemize}
\item[(a)] $i^*$ preserves weak equivalences between arbitrary objects (not just cofibrant ones).
\item[(b)] Let us say that a map of simplicial presheaves $Y\rt X$ is a \emph{local weak equivalence} (resp.\ a \emph{local (trivial) fibration}) if its image under $i^*$ is such. Since $i^*$ preserves finite limits (as we already mentioned), it follows that any levelwise (trivial) fibration between simplicial presheaves is a local (trivial) fibration.
\end{itemize}
It follows from (a) and the fact that $i^*i_*\cong \mm{id}$ that $\field{L}i^*\field{R}i_*\simeq \mm{id}$, so that $\mm{sSh}(\cat{S}, J)$ is a \emph{localization} of $\mm{sPSh}(\cat{S})$. Since the weak equivalences in $\mm{sSh}(\cat{S}, J)$ form an accessibly embedded accessible subcategory of the arrow category of $\mm{sSh}(\cat{S}, J)$, it follows that there exists a left Bousfield localization $\mm{sPSh}(\cat{S})_J$ of (the projective model structure on) $\mm{sPSh}(\cat{S})$ whose weak equivalences are the local weak equivalences. In this way one obtains a diagram of left Quillen functors
$$\xymatrix{
\mm{sPSh}(\cat{S})\ar[d]_{\mm{id}}\ar[r]^{i^*} & \mm{sSh}(\cat{S}, J)\\
\mm{sPSh}(\cat{S})_J\ar@{..>}[ru]_{j^*}
}$$
As ordinary functors, $j^*$ and its right adjoint $j_*$ can be identified with $i^*$ and $i_*$. The pair $j^*$ and $j_*$ forms a Quillen equivalence because $\field{L}j^*\field{R}j_*\simeq \mm{id}$ and $j^*$ preserves and detects weak equivalences.

We will use the following simple observations.
\begin{lemma}\label{lem:pullbackalonglocalfib}
In $\mm{sSh}(\cat{S}, J)$, as well as in $\mm{sPSh}(\cat{S})_J$, the pullback along a local fibration is a homotopy pullback.
\end{lemma}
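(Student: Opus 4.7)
The natural plan is to reduce the lemma to a \emph{local right properness} statement: the pullback of a local weak equivalence along a local fibration is a local weak equivalence. Granting this, the lemma follows by a standard model-categorical argument. Given a local fibration $f\colon Y\rt X$ and a map $A\rt X$, factor $A\rt X$ in the Joyal (respectively the localized projective) model structure as a trivial cofibration $A\xrightarrow{\sim} A'$ followed by a fibration $A'\twoheadrightarrow X$. Then the homotopy pullback of $A\rt X\lt Y$ is modelled by $A'\times_X Y$, and the canonical comparison map $A\times_X Y\rt A'\times_X Y$ is the pullback of the local weak equivalence $A\xrightarrow{\sim} A'$ along $f$. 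Local right properness then makes this comparison map a local weak equivalence, which is precisely the content of the lemma.

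To establish local right properness I would argue via points. When $(\cat{S}, J)$ has enough points, for each point $p$ the inverse image $p^*$ preserves finite limits, sends local fibrations to Kan fibrations of simplicial sets by definition, and sends local weak equivalences to weak equivalences. Applying $p^*$ to the pullback square thus yields a pullback of simplicial sets in which a weak equivalence is pulled back along a Kan fibration; by right properness of the Kan-Quillen model structure on $\sSet$, the resulting map is a weak equivalence. Since the family of functors $p^*$ is jointly conservative on local weak equivalences, we conclude. In the absence of enough points, one runs the same argument either using a surjective Boolean point or directly in the internal logic of $\mm{Sh}(\cat{S}, J)$, leveraging the intuitionistic validity of right properness for simplicial sets.

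The assertion for $\mm{sPSh}(\cat{S})_J$ reduces to the one for $\mm{sSh}(\cat{S}, J)$: the functor $i^*$ preserves pullbacks, and local fibrations and local weak equivalences in $\mm{sPSh}(\cat{S})$ are, by definition, those maps whose images under $i^*$ have these properties in $\mm{sSh}(\cat{S}, J)$. The main obstacle in the whole argument is really the treatment of sites without enough points, where one must invoke either a Boolean point or an internal-logic reformulation; once one accepts either device (as the authors already indicate above), the rest of the proof is formal, given the existence of the two model structures and their standard features.
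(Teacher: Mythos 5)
Your proposal is correct and follows essentially the same route as the paper: both first establish that pulling back a local weak equivalence along a local fibration yields a local weak equivalence by applying the finite-limit-preserving, jointly conservative inverse images of points and invoking right properness of the Kan--Quillen model structure, and then conclude by the standard factorization of the other leg into a (local) weak equivalence followed by a fibration. The only cosmetic difference is that you reduce the presheaf case to the sheaf case via $i^*$, where the paper simply runs the identical argument in both settings.
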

\begin{proof}
The two cases are proved in the same way. Let
$$\xymatrix{
W\ar[r]\ar[d] & Y\ar[d]^f\\
Z\ar[r]_g & X
}$$
be a pullback in $\mm{sSh}(\cat{S}, J)$ (or in $\mm{sPSh}(\cat{S})_J$) in which $f$ is a local fibration. The image of this pullback square under a point of $\mm{Sh}(\cat{S}, J)$ is a homotopy pullback of simplicial sets, since the usual model structure on simplicial sets is right proper. In particular, pullbacks along local fibrations preserve local weak equivalences, so that $\mm{sSh}(\cat{S}, J)$ is right proper as well.

Now let $Z\stackrel{\sim}{\hookrightarrow} Z'\rightarrow X$ be a factorization of $g$ into a local weak equivalence, followed by a fibration. Then the pullback $Z'\times_X Y$ computes the homotopy pullback of $f$ and $g$ and the map $W\rt Z'\times_X Y$ is a local weak equivalence.
\end{proof}
\begin{lemma}\label{lem:sheafificationlex}
Let $Y\rt X\lt Z$ be a diagram in $\mm{sSh}(\cat{S}, J)$. Then its homotopy pullback can be computed as $i^*(Q)$, where
$$\xymatrix{
Q\ar[r]\ar[d] & i_*(Z)\ar[d]\\
i_*(Y)\ar[r] & i_*(X)
}$$
is a homotopy pullback in $\mm{sPSh}(\cat{S})$.
\end{lemma}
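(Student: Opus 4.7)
The plan is to model the homotopy pullback $Q$ in $\mm{sPSh}(\cat{S})$ by a convenient strict pullback, apply $i^*$, and then verify via Lemma~\ref{lem:pullbackalonglocalfib} that the result is a homotopy pullback in $\mm{sSh}(\cat{S}, J)$.

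First, I would factor the map $i_*(Z) \to i_*(X)$ in the projective model structure on $\mm{sPSh}(\cat{S})$ as a trivial cofibration $i_*(Z) \overset{\sim}{\hookrightarrow} Z'$ followed by a levelwise fibration $Z' \twoheadrightarrow i_*(X)$. Since the projective model structure is right proper (weak equivalences and fibrations being tested levelwise in the right proper Kan--Quillen model structure), the strict pullback
$$Q \;=\; Z' \times_{i_*(X)} i_*(Y)$$
is a model for the homotopy pullback of $i_*(Y) \to i_*(X) \leftarrow i_*(Z)$. Applying $i^*$, and using that $i^*$ preserves finite limits together with the isomorphism $i^* i_* \cong \mm{id}$, I obtain
$$i^*(Q) \;\cong\; i^*(Z') \times_X Y.$$

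It then remains to identify this strict pullback with a homotopy pullback in $\mm{sSh}(\cat{S}, J)$. By observation (b), the levelwise fibration $Z' \to i_*(X)$ is a local fibration of simplicial presheaves, which by definition means that $i^*(Z') \to i^* i_*(X) = X$ is a local fibration in $\mm{sSh}(\cat{S}, J)$. By observation (a), $i^*$ sends the levelwise weak equivalence $i_*(Z) \overset{\sim}{\to} Z'$ to a local weak equivalence $Z \cong i^* i_*(Z) \overset{\sim}{\to} i^*(Z')$. Hence the original map $Z \to X$ has been factored as $Z \overset{\sim}{\to} i^*(Z') \twoheadrightarrow X$, i.e.\ as a local weak equivalence followed by a local fibration, and Lemma~\ref{lem:pullbackalonglocalfib} shows that the strict pullback $i^*(Z') \times_X Y \cong i^*(Q)$ computes the homotopy pullback of $Y \to X \leftarrow Z$.

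No step here presents any substantive obstacle: the entire argument is a matter of combining the exactness and equivalence-preserving properties (a) and (b) of the Quillen pair $(i^*, i_*)$ with the right-properness input provided by Lemma~\ref{lem:pullbackalonglocalfib}. The only thing to be careful about is to factor the leg coming from $Z$ before applying $i^*$, so that $i^*$ can be pushed through both the pullback and the factorization.
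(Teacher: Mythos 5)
Your proof is correct and follows essentially the same route as the paper's: factor one leg of the cospan in the projective model structure, use right properness to identify $Q$ with a strict pullback, push $i^*$ through using left exactness and $i^*i_*\cong\mm{id}$, and conclude via Lemma~\ref{lem:pullbackalonglocalfib}. The only (immaterial) difference is that you factor the $Z$-leg where the paper factors the $Y$-leg.
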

\begin{proof}
Let $i_*(Y)\rt P\rt i_*(X)$ be a factorization into a weak equivalence, followed by a fibration of simplicial presheaves. Then $Q=P\times_{i_*(X)} i_*(Z)$ is the homotopy pullback in $\mm{sPSh}(\cat{S})$ since this model category is right proper. So $i^*(Q)\cong i^*(P)\times_X Z$ and $i^*(P)$ fits into a sequence $$\xymatrix{
Y\cong i^*i_*(Y)\ar[r] & i^*(P)\ar[r] & i^*i_*(X)
}$$
of a local weak equivalence, followed by a local fibration. The lemma now follows from Lemma \ref{lem:pullbackalonglocalfib}.  
\end{proof}

\section{Bisimplicial presheaves and sheaves}\label{sec:bisimpsh}
We will write $\mm{bisSh}(\cat{S}, J)$ and $\mm{bisPSh}(\cat{S})$ for the categories of bisimplicial sheaves and presheaves on the site $(\cat{S}, J)$. These carry several model structures, but we will mostly be interested in the ``diagonal'' one, making the model categories Quillen equivalent to $\mm{sPSh}(\cat{S})$ and $\mm{sSh}(\cat{S}, J)$, respectively. More precisely, write
$$\xymatrix{
i^*\colon \mm{bisPSh}(\cat{S})\ar@<1ex>[r] & \mm{bisSh}(\cat{S}, J)\colon i_*\ar@<1ex>[l]
}$$
for the associated sheaf functor $i_*$ and its fully faithful right adjoint, and let
$$\xymatrix{
\delta^*\colon \mm{bisPSh}(\cat{S})\ar[r] & \mm{sPSh}(\cat{S})
}$$
be the diagonal functor. The functor $\delta^*$ has a left adjoint $\delta_!$ and a right adjoint $\delta^*$. Using the same notation for sheaves, we obtain a diagram of adjoint pairs
$$\xymatrix{
\mm{sPSh}(\cat{S})\ar@<1ex>[r]^{i^*}\ar@<-2ex>[d]_{\delta_!}\ar@<2ex>[d]^{\delta_*}\ar@{<-}[d]|{\delta^*} & \mm{sSh}(\cat{S}, J)\ar@<1ex>[l]^{i_*}\ar@<-2ex>[d]_-{\delta_!}\ar@<2ex>[d]^-{\delta_*}\\
\mm{bisPSh}(\cat{S})\ar@<1ex>[r]^{i^*} & \mm{bisSh}(\cat{S}, J)\ar@<1ex>[l]^{i_*}\ar[u]|-{\delta^*}
}$$
which are related by the following natural isomorphisms
$$
\delta^*i^*=i^*\delta^* \qquad\qquad \delta^*i_*=i_*\delta^*
$$
and hence $\delta_!i^*=i^*\delta_!$.

\begin{proposition}[{cf.\ \cite{moe89}}]\label{prop:diagonaltransfer}
The (projective, resp.\ Joyal) model structures can be transferred along the adjoint pair $(\delta_!, \delta^*)$ and give model structures and Quillen equivalences
$$\xymatrix@R=0.5pc{
\delta_!\colon \mm{sPSh}(\cat{S})\ar@<1ex>[r] & \mm{bisPSh}(\cat{S})\ar@<1ex>[l]\colon\delta^*\\
\delta_!\colon \mm{sPSh}(\cat{S})_J\ar@<1ex>[r] & \mm{bisPSh}(\cat{S})_J\ar@<1ex>[l]\colon\delta^*\\
\delta_!\colon \mm{sSh}(\cat{S}, J)\ar@<1ex>[r] & \mm{bisSh}(\cat{S}, J)\ar@<1ex>[l]\colon\delta^*
}$$
\end{proposition}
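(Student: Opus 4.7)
I would prove all three cases uniformly by invoking the Kan--Crans transfer theorem along the adjoint pair $(\delta_!, \delta^*)$. Each source model structure $\mm{sPSh}(\cat{S})$, $\mm{sPSh}(\cat{S})_J$, and $\mm{sSh}(\cat{S}, J)$ is cofibrantly generated with generating cofibrations $I$ and trivial cofibrations $J$. Taking $\delta_!(I), \delta_!(J)$ as the generators on the bisimplicial side, the smallness hypotheses are automatic in a presheaf category, so the only nontrivial condition is that $\delta^*$ sends every relative $\delta_!(J)$-cell complex to a weak equivalence. Since $\delta^*$ has its own right adjoint $\delta_*$ it preserves all colimits, so this reduces to verifying that $\delta^*\delta_!(j)$ is a trivial cofibration in the source structure for each $j \in J$: pushouts and transfinite composites of trivial cofibrations are trivial cofibrations, and $\delta^*$ turns the former into the latter.

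The generating trivial cofibrations have the form $j \otimes S$ where $j$ is a horn inclusion $\Lambda^k[n] \hookrightarrow \Delta[n]$ in the projective case (and analogously for $\mm{sPSh}(\cat{S})_J$ and the Joyal structure, possibly after adjoining the relevant localising maps or monomorphic trivial cofibrations). Because both $\delta^*$ and $\delta_!$ commute with the external tensoring by representables, $\delta^*\delta_!(j \otimes S) = \delta^*\delta_!(j) \otimes S$, so everything reduces to simplicial sets. Here $\delta^*\delta_!(\Delta[n]) \cong \Delta[n] \times \Delta[n]$ and the unit is the diagonal, and the map $\delta^*\delta_!(\Lambda^k[n]) \hookrightarrow \Delta[n] \times \Delta[n]$ is a monomorphism between contractible simplicial sets, hence a trivial cofibration in the Kan--Quillen structure; this is the classical bisimplicial set statement from \cite{moe89}. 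Tensoring with $S$ recovers a trivial cofibration in each of the three source structures.

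For the Quillen equivalence, the right adjoint $\delta^*$ preserves and reflects all weak equivalences by construction, so the pair is a Quillen equivalence precisely when the unit $\eta_X \colon X \to \delta^*\delta_!(X)$ is a weak equivalence for every cofibrant $X$. On a generating cofibrant object $\Delta[n] \otimes S$, $\eta_X$ is the diagonal $\Delta[n] \otimes S \to (\Delta[n] \times \Delta[n]) \otimes S$, a weak equivalence in each of the three source model structures. Since all three are left proper (the projective and its Bousfield localisation by general nonsense, the injective/Joyal structure because every object is cofibrant), and since $\mm{id}$ and $\delta^*\delta_!$ both preserve colimits, a standard cell induction along the cellular filtration of a cofibrant $X$ extends the equivalence from generators to all cofibrant objects.

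The main technical obstacle is the contractibility of $\delta^*\delta_!(\Lambda^k[n])$ and, more generally, the identification of $\delta^*\delta_!$ on boundary and horn inclusions with trivial cofibrations of simplicial sets; this is precisely what is proved in the bisimplicial-set setting in \cite{moe89}. Extending from simplicial sets to the three categories of simplicial (sheaves and) presheaves is then formal, since the functors $\delta^*$, $\delta_!$, $i^*$ and $i_*$ all act object-wise and commute as displayed.
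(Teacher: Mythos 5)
Your proposal follows essentially the same route as the paper: transfer along $(\delta_!,\delta^*)$ by checking $\delta^*\delta_!$ on generating trivial cofibrations, reduce to the simplicial-set fact that the unit $\Delta[n]\to\delta^*\delta_!\Delta[n]\cong\Delta[n]\times\Delta[n]$ is the diagonal, and deduce the Quillen equivalence from the unit being a weak equivalence together with $\delta^*$ preserving and detecting weak equivalences.

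One caveat worth fixing: the step ``everything reduces to simplicial sets'' via horn inclusions tensored with representables only covers the projective case. For $\mm{sPSh}(\cat{S})_J$ and for the Joyal structure on $\mm{sSh}(\cat{S},J)$ the generating trivial cofibrations are an inexplicit set of monomorphisms that are local weak equivalences, and a local weak equivalence need not be an objectwise one, so there is no objectwise simplicial-set computation to reduce to. The repair is exactly the ingredient you already use for the Quillen-equivalence step, which is also how the paper argues: the unit $X\to\delta^*\delta_!X$ is a \emph{natural levelwise} weak equivalence (by skeletal induction from the $\Delta[n]$ case), so the naturality square and two-out-of-three show that $\delta^*\delta_!$ carries local (hence $J$-local) weak equivalences to local weak equivalences, and it visibly preserves monomorphisms; this is what one verifies on the generating trivial cofibrations in all three cases uniformly. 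With that substitution your argument matches the paper's.
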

\begin{proof}
We prove the second case; the other two cases are similar. To show that the transferred model structure exists, it suffices to verify that $\delta^*\delta_!$ maps generating trivial cofibrations to local weak equivalences that are monic. Indeed, these maps are stable under pushout and transfinite composition while $\delta^*$ and $\delta_!$ both commute with colimits. 

It is easy to check that $\delta^*\delta_!$ preserves monomorphisms. The fact that it preserves local weak equivalences follows immediately from the fact that the unit map $X\rt \delta^*\delta_!(X)$ is a levelwise weak equivalence of simplicial presheaves. Indeed, this just follows from the analogous statement for simplicial \emph{sets}: by a standard skeletal induction it suffices to verify that for every simplex $\Delta[n]$, the map $\Delta[n]\rt \delta^*\delta_!(\Delta[n])$ is a weak equivalence. But this map can be identified with the diagonal map $\Delta[n]\rt \Delta[n]\times\Delta[n]$. 

Similarly, the fact that $X\rt \delta^*\delta_!(X)$ is a levelwise weak equivalence shows that the Quillen pair is a Quillen equivalence (because $\delta^*$ preserves and detects weak equivalences).
\end{proof}
\begin{remark}
Since $\delta^*$ preserves monomorphisms and weak equivalences, the pair $\delta^*\colon \mm{bisSh}(\cat{S}, J)\leftrightarrows \mm{sSh}(\cat{S}, J)\colon \delta_*$ is a Quillen pair as well. 
\end{remark}
The proof of Proposition \ref{prop:diagonaltransfer} applies equally well to further left Bousfield localizations of these model categories. More precisely, let $\lambda$ be a set of maps (which one can always take to be cofibrations) in $\mm{sPSh}(\cat{S})$ and let us denote by
$$\xymatrix{
i^*\colon\mm{sPSh}(\cat{S})_{J, \lambda}\ar@<1ex>[r] & \mm{sSh}(\cat{S}, J)_\lambda\ar@<1ex>[l]\colon i_*\\
}$$
the associated Quillen equivalence between the left Bousfield localizations at $\lambda$ and $i^*(\lambda)$, respectively. We will refer to the weak equivalences in these model structures as \emph{$\lambda$-equivalences} (leaving the reference to $J$ implicit when working with simplicial presheaves). The argument of Proposition \ref{prop:diagonaltransfer} shows that these two model structures can be transferred to model structures on bisimplicial (pre)sheaves along $(\delta_!, \delta^*)$, yielding two Quillen equivalences
$$\xymatrix@R=0.5pc{
\delta_!\colon \mm{sPSh}(\cat{S})_{J, \lambda}\ar@<1ex>[r] & \mm{bisPSh}(\cat{S})_{J, \lambda}\ar@<1ex>[l]\colon \delta^*\\
\delta_!\colon \mm{sPSh}(\cat{S}, J)_{\lambda}\ar@<1ex>[r] & \mm{bisSh}(\cat{S}, J)_{\lambda}\ar@<1ex>[l]\colon \delta^*.
}$$
In fact, the transferred model structure $\mm{bisPSh}(\cat{S})_{J, \lambda}$ is simply the left Bousfield localization of $\mm{bisPSh}(\cat{S})_{J}$ at the set of maps $\delta_!(\lambda)$, and similarly for sheaves.
\begin{lemma}\label{lem:realizationlemma}
Let $f\colon X\rt Y$ be a map of bisimplicial (pre)sheaves over $\cat{S}$. If $f$ induces a $\lambda$-equivalence $X_{n, -}\rt Y_{n, -}$ of simplicial (pre)sheaves for each $n\geq 0$, then the diagonal $\delta^*X\rt \delta^*Y$ is a $\lambda$-equivalence as well.
\end{lemma}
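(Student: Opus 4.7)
The plan is to adapt the classical realization lemma for bisimplicial sets (cf.\ \cite{moe89}) to the present setting by performing a skeletal induction in the row direction, relying on the left properness of the Bousfield-localized model structures.

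I would first reduce to the case of bisimplicial presheaves: since $i_*$ is fully faithful with $\delta^* i_* = i_* \delta^*$, and the Quillen equivalence $i^* \dashv i_*$ ensures that $i_*$ reflects $\lambda$-equivalences, the sheaf case follows from the presheaf case upon applying $i_*$.

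For bisimplicial presheaves, consider the skeletal filtration $X^{(0)} \subseteq X^{(1)} \subseteq \cdots$ in the row direction, where $X^{(n)} = \mm{sk}_n X$ denotes the sub-bisimplicial presheaf generated by rows of index $\leq n$, and similarly for $Y$. Since $\delta^*$ preserves colimits (being both a left and a right adjoint), $\delta^* X = \colim_n \delta^* X^{(n)}$. The standard pushout description of the skeleton gives, after applying $\delta^*$, that each transition $\delta^* X^{(n-1)} \hookrightarrow \delta^* X^{(n)}$ is a pushout in $\mm{sPSh}(\cat{S})$ along the pointwise monomorphism
$$\bigl(L_n X \times \Delta[n]\bigr) \cup_{L_n X \times \dau\Delta[n]} \bigl(X_n \times \dau\Delta[n]\bigr) \hookrightarrow X_n \times \Delta[n],$$
where $L_n X$ is the $n$-th latching object of the simplicial object $[k] \mapsto X_k$ in $\mm{sPSh}(\cat{S})$. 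I would then prove by induction on $n$ that $\delta^* X^{(n)} \to \delta^* Y^{(n)}$ is a $\lambda$-equivalence. The inductive step combines: (i) the inductive hypothesis for $\delta^* X^{(n-1)} \to \delta^* Y^{(n-1)}$; (ii) a parallel induction showing $L_n X \to L_n Y$ is a $\lambda$-equivalence (the latching object being built from the $X_k$ with $k < n$ via iterated pushouts along monomorphisms); and (iii) the hypothesis that $X_n \to Y_n$ is a $\lambda$-equivalence, combined with the stability of $\lambda$-equivalences under product with $\dau\Delta[n] \hookrightarrow \Delta[n]$. The gluing lemma for homotopy pushouts then yields the $\lambda$-equivalence $\delta^* X^{(n)} \to \delta^* Y^{(n)}$, and transfinite composition along monomorphisms concludes.

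The main technical obstacle is verifying that the gluing lemma applies to pushouts along arbitrary monomorphisms of simplicial presheaves, rather than merely along projective cofibrations. This rests on the observation that pushouts of simplicial presheaves along pointwise monomorphisms are, at each object of $\cat{S}$, pushouts of simplicial sets along monomorphisms, hence automatically homotopy pushouts in the Kan--Quillen structure. This ``monomorphism left-properness'' of the projective model structure on $\mm{sPSh}(\cat{S})$ is preserved under left Bousfield localization at $J$ and at $\lambda$, providing the gluing property for $\lambda$-equivalences needed in the induction.
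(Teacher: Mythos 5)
Your argument is correct, and it is essentially the explicit, cell-by-cell form of the argument the paper gives: the paper disposes of the lemma in one line by observing that $\delta^*\colon \mm{bisSh}(\cat{S},J)\rt \mm{sSh}(\cat{S},J)_\lambda$ is left Quillen for the Reedy model structure on $\mm{sSh}(\cat{S},J)^{\Del^{\op}}$, in which every object is Reedy cofibrant because the cofibrations of the (localized) Joyal structure are the monomorphisms, so that Ken Brown's lemma applies to the levelwise $\lambda$-equivalence. Your skeletal induction --- the latching-object pushout-product description of $\delta^*X^{(n-1)}\rt \delta^*X^{(n)}$, the gluing lemma for pushouts along monomorphisms, and closure under transfinite composition --- is precisely what that one-line citation packages, so the two proofs have the same mathematical content; yours is more self-contained, the paper's is shorter, and your reduction of the sheaf case to the presheaf case via $i_*$ mirrors the paper's implicit reduction in the other direction via $i^*$ (both work, since $i^*$ preserves and detects $\lambda$-equivalences and commutes with $\delta^*$). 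The one step you should justify rather than assert is the stability of $\lambda$-equivalences under $(-)\times\dau\Delta[n]$ and $(-)\times\Delta[n]$: this is not a formal property of an arbitrary left Bousfield localization (the paper itself treats the analogous stability under $(-)\times S$ for a representable $S$ as an \emph{extra hypothesis} in the grouplike-monoids example), and a reader may reasonably worry it fails. It does hold for products with constant simplicial sets, either because the localization of a simplicial model category at a set of cofibrations is again simplicial, so that $\lambda$-local objects are closed under cotensoring by simplicial sets, or by an auxiliary induction on the skeleta of the simplicial set reducing to closure of $\lambda$-equivalences under coproducts and gluing along monomorphisms; in the Reedy formulation this point is absorbed into the pushout-product axiom for $\mm{sSh}(\cat{S},J)_\lambda$. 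With that point made explicit, your proof is complete.
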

\begin{proof}
This follows from the fact that $\delta^*\colon \mm{bisSh}(\cat{S}, J)\rt \mm{sSh}(\cat{S}, J)_\lambda$ is a left Quillen functor for the Reedy model structure on $\mm{bisSh}(\cat{S}, J)=\mm{sSh}(\cat{S}, J)^{\Del^\op}$.
\end{proof}

\section{Actions on simplicial presheaves and sheaves}\label{sec:act}
We begin with some terminology and notation. Let $\cat{C}$ be a \emph{category object} in one of the (model) categories $\mm{sPSh}(\cat{S})$ or $\mm{sSh}(\cat{S}, J)$. Thus $\cat{C}$ is given by simplicial (pre)sheaves $\mm{ob}(\cat{C})$ and $\mm{mor}(\cat{C})$ of objects and morphisms, together with structure maps for source and target
$$\xymatrix{
\mm{mor}(\cat{C})\ar@<1ex>[r]^s\ar@<-1ex>[r]_t & \mm{ob}(\cat{C})
}$$
and two more structure maps for units and composition, all satisfying the usual identities. For any such category object $\cat{C}$, its nerve $N\cat{C}$ is a bisimplicial (pre)sheaf whose diagonal we denote
$$
B\cat{C}= \delta^*N\cat{C}
$$
and call the classifying (pre)sheaf or ``space'' of $\cat{C}$. Thus, $B\cat{C}$ is an object of $\mm{sPSh}(\cat{S})$ or $\mm{sSh}(\cat{S}, J)$.

A \emph{left action} of $\cat{C}$ on a simplicial presheaf $X$ is given by maps
$$\xymatrix{
\pi\colon X\ar[r] & \mm{ob}(\cat{C}) & \text{and} & \mu\colon s^*(X)=\mm{mor}(\cat{C})\times_{\mm{ob}(\cat{C})} X\ar[r] & X
}$$
satisfying the usual identities (which express that for any $S\in\cat{S}$, the components $\pi_S$ and $\mu_S$ determine a covariant simplicial functor $\cat{C}(S)\rt \cat{sSet}$, natural in $S$). The domain of the map $\mu$ is the pullback $s^*(X)$ of $\pi$ along $s$. Such an action by $\cat{C}$ on $X$ defines a new category object $X_{\cat{C}}$ in $\mm{sPSh}(\cat{S})$ (or in $\mm{sSh}(\cat{S}, J)$) with
\begin{align*}
\mm{ob}(X_{\cat{C}}) &= X\\
\mm{mor}(X_{\cat{C}}) &= s^*(X)
\end{align*}
while the new source and target map $s^*(X)\rightrightarrows X$ are the projection and the action $\mu$. For any object $S\in\cat{S}$ and any simplicial degree $n$, the category $X_{\cat{C}}(S)_n$ (in sets) can therefore be described as follows: the objects are $n$-simplices $x\in X(S)_n$ and a morphism $x\rt y$ is a morphism $\phi\colon \pi(x)\rt \pi(y)$ in the category $\cat{C}(S)_n$ such that $\mu(\phi, x)=y$. There is an obvious projection functor
$$\xymatrix{
\pi\colon X_{\cat{C}}\ar[r] & X
}$$
which induces a map of classifying spaces
$$\xymatrix{
B\pi\colon BX_{\cat{C}}\ar[r] & B\cat{C}.
}$$
For any $n$-simplex $c\in \mm{ob}(\cat{C})(S)_n$, i.e.\ a map $S\times \Delta[n]\rt \mm{ob}(\cat{C})$ of simplicial presheaves, we write $X(c)$ for the pullback
$$\xymatrix{
X(c)\ar[r]\ar[d] & X\ar[d]^\pi\\
S\times\Delta[n]\ar[r]_-c & \mm{ob}(\cat{C})
}$$
A 0-simplex $c\colon S\times\Delta[0]\rt \mm{ob}(\cat{C})$ determines a map $S\times\Delta[0]\rt B\cat{C}$ and $X(c)$ fits into a pullback of simplicial (pre)sheaves
$$\xymatrix{
X(c)\ar[r]\ar[d] & BX_{\cat{C}}\ar[d]\\
S\times\Delta[0]\ar[r]_-c & B\cat{C}.
}$$
The action $\mu$ defines a map $\ol{\mu}=(\pi_1, \mu)$ over $\mm{mor}(\cat{C})$
$$\xymatrix{
s^*(X)\ar[rr]^-{\ol{\mu}}\ar[rd] & & t^*(X)\ar[ld]\\
 & \mm{mor}(\cat{C}).
}$$
If $\phi\in\mm{mor}(\cat{C})(S)_n$ is a morphism from $c$ to $d$, i.e.\ $\phi\colon S\times\Delta[n]\rt \mm{mor}(\cat{C})$ with $s\phi=c$ and $t\phi=d$, then $\ol{\mu}$ restricts to a map of simplicial presheaves
$$\xymatrix{
\phi_*\colon X(c)\ar[r] & X(d).
}$$
Given a set of maps $\lambda$ in $\mm{sPSh}(\cat{S})_J$ or $\mm{sSh}(\cat{S}, J)$, we can require these action maps to be weak equivalences in the resulting left Bousfield localization:
\begin{definition}
Let $\cat{C}$ be a category acting on $X$ in $\mm{sPSh}(\cat{S})$, as above. Then $\cat{C}$ is said to \emph{act by $\lambda$-equivalences} if for any object $S\in\cat{S}$ and any morphism $\phi\colon S\times\Delta[n]\rt \mm{mor}(\cat{C})$ from $c=s\phi$ to $d=t\phi$, the map $\phi_*\colon X(c)\rt X(d)$ is a $\lambda$-equivalence.
\end{definition}
There are some conditions closely related to this definition. Let us call a map of simplicial presheaves over a simplicial presheaf $B$
$$\xymatrix{
X\ar[rr]\ar[rd] & & Y\ar[ld]\\
& B
}$$
a \emph{stable $\lambda$-equivalence} if for any map $A\rt B$, the pullback $A\times_B X\rt A\times_B Y$ is a $\lambda$-equivalence.
\begin{lemma}\label{lem:actingbyweonvertices}
Suppose that the map $\pi\colon X\rt \mm{ob}(\cat{C})$ is a local fibration. Then $\cat{C}$ acts by $\lambda$-equivalences iff the condition holds for $n=0$ only, i.e.\ for every vertex in $\mm{mor}(\cat{C})(S)$.
\end{lemma}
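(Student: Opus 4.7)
The forward implication is immediate by specializing the definition to $n=0$, so the content lies in the converse. Assume the condition holds at every vertex and fix $\phi\colon S\times\Delta[n]\rt \mm{mor}(\cat{C})$ with source $c=s\phi$ and target $d=t\phi$; the goal is to prove that $\phi_*\colon X(c)\rt X(d)$ is a $\lambda$-equivalence. Let $v_0\colon \Delta[0]\hookrightarrow\Delta[n]$ be the inclusion of the $0$-th vertex, write $\phi_0$ for the restriction of $\phi$ along $S\times v_0$, with source $c_0$ and target $d_0$. Functoriality of pullback in the base produces a commutative square
$$\xymatrix{
X(c_0)\ar[r]^-{(\phi_0)_*}\ar[d] & X(d_0)\ar[d] \\
X(c)\ar[r]^-{\phi_*} & X(d)
}$$
in which the top horizontal is a $\lambda$-equivalence by hypothesis. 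The plan is to show the two vertical maps are $\lambda$-equivalences and conclude by two-out-of-three.

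For the left vertical (the right is identical): since local fibrations are stable under pullback, $X(c)\rt S\times\Delta[n]$ is a local fibration. Lemma \ref{lem:pullbackalonglocalfib} then identifies the defining square
$$\xymatrix{
X(c_0)\ar[r]\ar[d] & X(c)\ar[d] \\
S\times\Delta[0]\ar[r]_-{S\times v_0} & S\times\Delta[n]
}$$
as a homotopy pullback in $\mm{sPSh}(\cat{S})_J$ (resp.\ in $\mm{sSh}(\cat{S}, J)$). Its bottom arrow is a levelwise weak equivalence, hence in particular a $J$-local equivalence; by right properness of the $J$-local structure (observed in the course of the proof of Lemma \ref{lem:pullbackalonglocalfib}) the top arrow is therefore a $J$-local equivalence as well, and \emph{a fortiori} a $\lambda$-equivalence.

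The main pitfall to avoid is attempting to work with homotopy pullbacks in the $\lambda$-local structure, which need not be right proper. Instead, one performs all homotopy-pullback reasoning in the $J$-local structure, where Lemma \ref{lem:pullbackalonglocalfib} is available, and only at the end appeals to the fact that every $J$-local equivalence is a $\lambda$-equivalence.
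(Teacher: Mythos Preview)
Your argument is correct and matches the paper's own proof: both restrict $\phi$ to a vertex, use that local fibrations are stable under pullback so that Lemma~\ref{lem:pullbackalonglocalfib} identifies the vertex-restriction squares as homotopy pullbacks in the $J$-local structure, and then conclude by two-out-of-three. The paper phrases the biconditional for an arbitrary vertex $v_i$ rather than just $v_0$, but the content is identical.
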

\begin{proof}
Let $\phi\colon c\rt d$ be as in the definition and for any $i=0, \dots, n$, consider the pullback
$$\xymatrix@C=1pc@R=1pc{
X(c_i)\ar[dd]\ar[rd]^-{(\phi_i)_*}\ar[rrr]^-{w_i} & & & X(c)\ar[rd]^{\phi_*}\ar[dd]|!{[lld]; [rd]}\hole\\
& X(d_i)\ar[ld]\ar[rrr]^-(0.37){u_i} & & & X(d)\ar[ld]\\
S\times\Delta[0]\ar[rrr]_-{v_i} & & & S\times\Delta[n] 
}$$
where $v_i$ is the inclusion of the $i$-th vertex and $u_i$ and $w_i$ are its pullbacks. Each of these three maps is a local weak equivalence by Lemma \ref{lem:pullbackalonglocalfib}, so that $\phi_*$ is a $\lambda$-equivalence if and only if $(\phi_i)_*$ is.
\end{proof}
\begin{lemma}\label{lem:actingbywe}
Let $\cat{C}$ be a category acting on $X$ in $\mm{sPSh}(\cat{S})$, as above. Then $\cat{C}$ acts on $X$ by $\lambda$-equivalences iff $\ol{\mu}\colon s^*(X)\rt t^*(X)$ is a stable $\lambda$-equivalence over $\mm{mor}(\cat{C})$.
\end{lemma}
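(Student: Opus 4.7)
The plan is to handle the two implications separately. The reverse direction is immediate: specialising the assumption that $\ol{\mu}$ is a stable $\lambda$-equivalence to the map $\phi\colon S\times\Delta[n]\rt \mm{mor}(\cat{C})$ produces by construction the pullback map $\phi_*\colon X(c)\rt X(d)$, which is therefore a $\lambda$-equivalence.

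For the forward direction, assume $\cat{C}$ acts by $\lambda$-equivalences and fix $f\colon A\rt \mm{mor}(\cat{C})$. I plan to show the pullback of $\ol{\mu}$ along $f$ is a $\lambda$-equivalence by cellular induction on $A$. Choosing a projective cofibrant replacement $q\colon QA\rt A$, which is a trivial fibration, the pullbacks of $q$ along $s^*(X)\rt \mm{mor}(\cat{C})$ and along $t^*(X)\rt \mm{mor}(\cat{C})$ are again trivial fibrations. By 2-out-of-3 in the commutative square induced by $\ol{\mu}$ it then suffices to treat the case where $A$ is a projective cell complex, presented as a transfinite composition $\emptyset = A^{(0)}\subset A^{(1)}\subset\cdots$ of pushouts of coproducts of the generating cofibrations $S\times\dau\Delta[n]\hookrightarrow S\times\Delta[n]$, and to induct along this cell filtration.

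At a successor stage, pulling the pushout defining $A^{(\alpha+1)}$ from $A^{(\alpha)}$ back along $s^*(X)\rt \mm{mor}(\cat{C})$ and $t^*(X)\rt \mm{mor}(\cat{C})$ — which preserves both colimits and monomorphisms because $\mm{sPSh}(\cat{S})$ is a topos — produces a cube of two pushout squares whose vertical edges are induced by $\ol{\mu}$. Three of those edges are $\lambda$-equivalences: over $\coprod S\times\Delta[n]$ by hypothesis (together with closure of $\lambda$-equivalences under coproducts), over $A^{(\alpha)}$ by the inductive hypothesis, and over $\coprod S\times\dau\Delta[n]$ by a nested induction on $n$, since $\dau\Delta[n]$ itself assembles from $\Delta[k]$'s with $k<n$. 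The pulled-back horizontals are monomorphisms, hence cofibrations in the injective model structure on $\mm{sPSh}(\cat{S})_{J,\lambda}$, which is left proper and has the same $\lambda$-equivalences as the projective structure; the gluing lemma therefore forces the fourth edge, between the two pushouts, to be a $\lambda$-equivalence. Transfinite composition and limit-ordinal steps are handled analogously.

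The main subtlety I foresee is this juggling of projective and injective cofibrations: the projective cell structure is indispensable for running the induction, but projective cofibrations are not preserved under pullback, whereas monomorphisms are. The gluing step must therefore be performed in the injective model structure, and the argument rests on the fact that the two model structures share their $\lambda$-equivalences and are both left proper.
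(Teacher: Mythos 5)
Your argument is correct, but it takes a genuinely different route from the paper's. The paper proves the forward direction in one shot: given $A\rt \mm{mor}(\cat{C})$, it resolves $s^*(X)_A$ and $t^*(X)_A$ by the diagonals of bisimplicial presheaves built from the category of simplices of $A$ (the augmentations are levelwise weak equivalences because each category of factorizations of a simplex has an initial object), observes that in each fixed simplicial degree the comparison map is a coproduct of the maps $\phi_*\colon X(c)\rt X(d)$, and concludes by the realization Lemma \ref{lem:realizationlemma}; two-out-of-three then finishes. Your proof instead runs a cellular induction: cofibrant replacement of $A$, base change of the cell attachments (exact and mono-preserving because $\mm{sPSh}(\cat{S})$ is a topos), the gluing lemma in the left proper $\lambda$-localized injective structure at successor stages, and a nested induction on dimension to handle $\dau\Delta[n]$. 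Both work. The paper's approach is shorter and hides the skeletal induction inside the Reedy-theoretic realization lemma, at the cost of introducing the bisimplicial resolution; yours is more elementary and self-contained but carries more bookkeeping --- the double (transfinite and dimensional) induction, closure of $\lambda$-equivalences under coproducts and sequential colimits along monomorphisms, and the projective/injective juggling you rightly flag (which is legitimate, since the two localized structures share their weak equivalences, and is in fact the same device the paper uses in its proof of Theorem \ref{thm:quillenb}). One small point to make explicit if you write this up: the induction hypothesis should be quantified over \emph{all} maps from a cell complex with cells of dimension $\leq n$ into $\mm{mor}(\cat{C})$, so that it applies to the restriction of the attaching data to $S\times\dau\Delta[n]$.
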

\begin{proof}
Since the maps $\phi_*$ are pullbacks of $\ol{\mu}$ over $\mm{mor}(\cat{C})$, the condition of the lemma is clearly sufficient. For the converse, consider a map $A\rt \mm{mor}(\cat{C})$ and let
$$\xymatrix{
s^*(X)_A\ar[rr]\ar[rd] & & t^*(X)_A\ar[ld]\\
& A
}$$
be the pullback of $\ol{\mu}$ along $A\rt \mm{mor}(\cat{C})$. Consider the bisimplicial presheaf $\tilde{X}(s)\in \mm{bisPSh}(\cat{S})$ whose value on an object $S\in \cat{S}$ has as $(p, q)$-simplices diagrams of the form
$$\xymatrix{
\Delta[p]\ar[d]\ar[rrr] & & & s^*(X)_A(S)\ar[d]\\
\Delta[n_0]\ar[r] & \dots\ar[r] & \Delta[n_q]\ar[r] & A(S).
}$$
In the same way, let $\tilde{X}(t)$ be the bisimplicial presheaf obtained using $t^*(X)_A$ instead of $s^*(X)_A$.

For fixed $S$ and $p$, the simplicial set $\tilde{X}(s)(S)_p$ is the nerve of a category whose objects are pairs consisting of a $p$-simplex of $s^*(X)_A$ and a factorization of $\Delta[p]\rt s^*(X)_A(S)\rt A(S)$ through a simplex $\Delta[n]$ (as in the above diagram, for $q=0$). For a fixed $p$-simplex of $s^*(X)_A$, there is an initial such factorization, so that there is a (natural) weak equivalence
$$\xymatrix{
s^*(X)_{A, p}\ar[r] & \tilde{X}(s)(S)_p
}$$
from a discrete simplicial set to the simplicial set $\tilde{X}(s)(S)_p$. Taking diagonals, it follows that there is a (projective) weak equivalence of simplicial presheaves
$$\xymatrix{
s^*(X)_A\ar[r] & \delta^*\tilde{X}(s).
}$$
The same holds for $t^*(X)_A\rt \delta^*\tilde{X}(t)$, of course.

On the other hand, in each fixed simplicial degree $q$, the map $\tilde{X}(s)_{-, q}\rt \tilde{X}(t)_{-, q}$ is a coproduct of maps $\phi_*\colon X(c)\rt X(d)$, indexed by the composite maps
$$\xymatrix{
\phi\colon \Delta[n_0]\ar[r] & \dots\ar[r] & \Delta[n_q]\ar[r] & A(S)\ar[r] & \mm{mor}(\cat{C}).
}$$
These maps $\phi_*$ are $\lambda$-equivalences by assumption, so the map $\delta^*\tilde{X}(s)\rt \delta^*\tilde{X}(t)$ is a $\lambda$-equivalence as well, by Lemma \ref{lem:realizationlemma}. The commutative square
$$\xymatrix{
s^*(X)_A\ar[r]\ar[d]_\sim & t^*(X)_A\ar[d]^\sim\\
\delta^*\tilde{X}(s)\ar[r] & \delta^*\tilde{X}(t)
}$$
now shows that $s^*(X)_A\rt t^*(X)_A$ is a $\lambda$-equivalence, which finishes the proof.
\end{proof}

\section{The main theorem}\label{sec:mainthm}
In this section we will state and prove the main theorem. Some examples and applications have already been mentioned in the introduction and will be elaborated on in the next section. As before, we work over a fixed site $(\cat{S}, J)$ and consider the projective local model structure on $\mm{sPSh}(\cat{S})_J$, the injective one on $\mm{sSh}(\cat{S}, J)$, as well as left Bousfield localizations of these at a set of maps $\lambda$.
\begin{theorem}\label{thm:quillenb}
Let $\cat{C}$ be a category object acting on a simplicial presheaf $X$ by $\lambda$-equivalences. Suppose $\pi\colon X\rt \mm{ob}(\cat{C})$ is a local fibration. Then for any object $S\in\cat{S}$ and any $c\in\cat{C}(S)_0$, the map from the pullback $X(c)$ as in
\begin{equation}\label{diag:quillensquare}\vcenter{\xymatrix{
X(c)\ar[r]\ar[d] & BX_{\cat{C}}\ar[d]\\
S\times\Delta[0]\ar[r] & B\cat{C}
}}\end{equation}
to the homotopy pullback is a $\lambda$-equivalence.
\end{theorem}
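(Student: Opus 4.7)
The plan is to follow Moerdijk \cite{moe89} and compare $X(c)$ to the homotopy pullback through an intermediate bisimplicial object, using the realization lemma (Lemma \ref{lem:realizationlemma}) twice: once in the internal simplicial direction and once in the nerve direction.

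I begin with the bisimplicial perspective. Write $B\cat{C}=\delta^* N\cat{C}$ and $BX_{\cat{C}}=\delta^* NX_{\cat{C}}$, and view $c\colon S\times\Delta[0]\rt B\cat{C}$ as the diagonal of the bisimplicial map $\ul{S}\rt N\cat{C}$ that is degenerate at $c$ in each nerve degree $q$. In each such $q$, the map $NX_{\cat{C},q}\rt N\cat{C}_q$ is the pullback of $\pi$ along the ``first source'' map $N\cat{C}_q\rt\mm{ob}(\cat{C})$, and is therefore a local fibration. I replace $\ul{S}\rt N\cat{C}$ by the ``undercategory'' model $E\rt N\cat{C}$, where $E_q$ parametrizes $(q+1)$-composable sequences $c\rt z_0\rt\cdots\rt z_q$ and the map to $N\cat{C}_q$ drops the initial morphism. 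Set $P = E \times_{N\cat{C}} NX_{\cat{C}}$, whose bidegree-$q$ slice is $P_q=\{(c\rt z_0\rt\cdots\rt z_q,\ x_0\in X(z_0))\}$.

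The key comparison runs through the auxiliary bisimplicial object $\tilde{P}$ with $\tilde P_q := E_q \times_S X(c)$. The map $\tilde P_q \rt P_q$ that applies the first morphism to $y\in X(c)$ is the pullback of the stable $\lambda$-equivalence $\ol{\mu}\colon s^*X\rt t^*X$ (Lemma \ref{lem:actingbywe}) along the ``first morphism'' map $E_q\rt \mm{mor}(\cat{C})$, hence a $\lambda$-equivalence for each $q$; Lemma \ref{lem:realizationlemma} applied in the nerve direction then yields a $\lambda$-equivalence $\delta^*\tilde P\rt\delta^* P$. In the other direction, the map $\ul{X(c)}\rt\tilde P$ induced by the identity section $\ul{S}\rt E$ at $\mm{id}_c$ is, for each $S\in\cat{S}$ and each internal degree $p$, the product with $X(c)(S)_p$ of the inclusion $*\rt N(c|_p\backslash\cat{C}(S)_p)$ of the initial object into the (contractible) nerve of the undercategory. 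So it is an objectwise, and hence local, weak equivalence of simplicial presheaves for each $p$; Lemma \ref{lem:realizationlemma} in the internal direction then gives $X(c)=\delta^*\ul{X(c)}\rt\delta^*\tilde P$ a local weak equivalence. Composing, $X(c)\rt\delta^* P$ is a $\lambda$-equivalence.

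It remains to identify $\delta^* P=\delta^* E\times_{B\cat{C}}BX_{\cat{C}}$ with the homotopy pullback in \eqref{diag:quillensquare}. Since $S\times\Delta[0]\rt\delta^* E$ is a local weak equivalence (by the same internal-direction argument applied to $\ul{S}\rt E$), this would follow from Lemma \ref{lem:pullbackalonglocalfib} if one of $\delta^* E\rt B\cat{C}$ or $BX_{\cat{C}}\rt B\cat{C}$ were a local fibration. This is the main obstacle: only the bidegreewise counterparts are manifestly so. The expected resolution is to lift everything to bisimplicial presheaves via Proposition \ref{prop:diagonaltransfer}, in which the square $P\rt NX_{\cat{C}}$ over $E\rt N\cat{C}$ is bidegreewise a homotopy pullback (Lemma \ref{lem:pullbackalonglocalfib}) and hence a bisimplicial homotopy pullback, which transports along the Quillen equivalence $(\delta_!,\delta^*)$ to a homotopy pullback of diagonals in $\mm{sPSh}(\cat{S})_{J,\lambda}$.
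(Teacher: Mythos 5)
Your comparison $X(c)\rt \delta^*\tilde{P}\rt \delta^*P$ is sound and is essentially the same device the paper uses (its bisimplicial presheaves $X^0_\sigma\rt X_\sigma$ play the role of your $\tilde P\rt P$, with the action hypothesis entering through Lemma \ref{lem:realizationlemma} in exactly the way you describe). The fatal problem is the last step, which you correctly flag as the main obstacle but then resolve with a false principle: a square of bisimplicial presheaves that is a homotopy pullback in each bidegree $q$ is \emph{not} in general a homotopy pullback in the transferred model structure, i.e.\ its diagonal need not be a homotopy pullback. If that implication held, you could apply it directly to the original square \eqref{diag:quillenbsquarebeforereal}: each $N(X_{\cat{C}})_q\rt N(\cat{C})_q$ is a base change of the local fibration $\pi$, so every bidegree of that square is already a homotopy pullback by Lemma \ref{lem:pullbackalonglocalfib}, and the theorem would follow with \emph{no hypothesis on the action at all} --- which is false (take $\cat{C}=(0\rt 1)$ acting on $X_0=\ast\rt X_1=\ast\sqcup\ast$; the fibre is a point but the homotopy fibre of $BX_{\cat{C}}\rt B\cat{C}$ is two points). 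The correct bisimplicial statement (Quillen's Lemma in \S 1 of \cite{qui73}, or the Bousfield--Friedlander theorem) requires in addition that all squares induced by the simplicial operators $[q']\rt[q]$ in the nerve direction be homotopy cartesian; this is precisely where the ``acts by $\lambda$-equivalences'' hypothesis would have to re-enter, and verifying it (and the lemma itself) in an arbitrary $\lambda$-localization is not easier than the theorem.

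The paper sidesteps this entirely: rather than taking the undercategory $E$ as a global resolution of $S\times\Delta[0]\rt N\cat{C}$ and then having to argue that the strict pullback along $E\rt N\cat{C}$ is the homotopy pullback, it factors $S\times\Delta[0]\rt N\cat{C}$ into a trivial cofibration followed by an honest fibration via the small object argument, so that the pullback along it \emph{is} the homotopy pullback by construction. The price is that one must check, cell by cell, that pulling $N(X_{\cat{C}})$ back along each attachment $T\times\delta_!\Lambda^k[n]\rt T\times\delta_!\Delta[n]$ over a string $\sigma=(c_0\rt\cdots\rt c_n)$ yields a $\lambda$-equivalence on diagonals; this is done by your $\tilde P$-type comparison (replacing $c_{\alpha(0)}$ by $c_0$) together with Lemma \ref{lem:pullbackalonglocalfib} for the resulting pullback of $\pi$. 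To repair your write-up you would either have to adopt this cell-by-cell factorization, or prove a $\lambda$-local version of the bisimplicial realization--fibration lemma; as it stands, the final paragraph does not go through.
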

\begin{remark}
Note that the theorem refers to the homotopy pullback in the projective model structure and \emph{not} in the $\lambda$-localized model structure. Of course, the two notions coincide in the case where the localization is (homotopy) left exact. This is the case where the model category $\mm{sPSh}(\cat{S})_{\lambda}$ presents an $\infty$-topos.
\end{remark}
It will be clear that our proof for presheaves applies to sheaves as well, but in fact the case of sheaves is also just a direct consequence:
\begin{corollary}
Consider a left Bousfield localization $\mm{sSh}(\cat{S}, J)_\lambda$ of the Joyal model structure. If a category object $\cat{C}$ acts on a simplicial sheaf $X$ by $\lambda$-equivalences and the map $X\rt \mm{ob}(\cat{C})$ is a local fibration, then the map 
$$\xymatrix{
X(c)\ar[r] & BX_{\cat{C}}\times^h_{B\cat{C}} \Big(S\times\Delta[0]\Big)
}$$
is a $\lambda$-equivalence, where the homotopy pullback is computed in the Joyal model structure.
\end{corollary}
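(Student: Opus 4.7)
The strategy is to reduce the corollary to the presheaf version, Theorem~\ref{thm:quillenb}, by transporting the sheaf data along the fully faithful right adjoint $i_*\colon\mm{sSh}(\cat{S},J)\hookrightarrow\mm{sPSh}(\cat{S})$. All the relevant constructions behave well under $i_*$: since $i_*$ preserves all limits, it sends $\cat{C}$ to a category object $i_*\cat{C}$ in presheaves with $\mm{ob}(i_*\cat{C})=i_*\mm{ob}(\cat{C})$ and $\mm{mor}(i_*\cat{C})=i_*\mm{mor}(\cat{C})$, and sends the action of $\cat{C}$ on $X$ to an action of $i_*\cat{C}$ on $i_*X$. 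Using the compatibility $\delta^*i_*=i_*\delta^*$ recorded in Section~\ref{sec:bisimpsh} together with the fact that the nerve is built from iterated pullbacks, one further has $N(i_*\cat{C})=i_*N\cat{C}$, hence $B(i_*\cat{C})=i_*B\cat{C}$ and $B(i_*X)_{i_*\cat{C}}=i_*BX_{\cat{C}}$. Similarly, the pullback $(i_*X)(c)$ agrees with $i_*(X(c))$.

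Next I would verify the hypotheses of Theorem~\ref{thm:quillenb} for this transported action. The map $i_*X\rt\mm{ob}(i_*\cat{C})$ is a local fibration of presheaves because its image under $i^*$ is the original local fibration $X\rt\mm{ob}(\cat{C})$, using $i^*i_*\cong\mm{id}$ and the very definition of local fibration. Each action map $(i_*\phi)_*=i_*(\phi_*)$ is a $\lambda$-equivalence of presheaves: a map $f$ of presheaves is a $\lambda$-equivalence iff $i^*f$ is a $\lambda$-equivalence of sheaves (by construction of the Bousfield-localized Quillen equivalence between $\mm{sPSh}(\cat{S})_{J,\lambda}$ and $\mm{sSh}(\cat{S},J)_\lambda$), and here $i^*i_*\phi_*=\phi_*$ is a $\lambda$-equivalence by hypothesis. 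Thus $i_*\cat{C}$ acts on $i_*X$ by $\lambda$-equivalences.

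Applying Theorem~\ref{thm:quillenb} to this action, the map from $i_*(X(c))$ to the homotopy pullback $P$ in $\mm{sPSh}(\cat{S})$ of $i_*BX_{\cat{C}}\rt i_*B\cat{C}\lt S\times\Delta[0]$ is a $\lambda$-equivalence in $\mm{sPSh}(\cat{S})_{J,\lambda}$. Applying $i^*$, which preserves $\lambda$-equivalences and satisfies $i^*i_*\cong\mm{id}$, yields a $\lambda$-equivalence $X(c)\rt i^*P$ in $\mm{sSh}(\cat{S},J)_\lambda$. By Lemma~\ref{lem:sheafificationlex}, $i^*P$ is exactly the homotopy pullback of $BX_{\cat{C}}\rt B\cat{C}\lt S\times\Delta[0]$ computed in the Joyal model structure, producing the map in the statement.

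The one place requiring attention is the identification of the target: Theorem~\ref{thm:quillenb} produces a homotopy pullback in the projective model structure on presheaves, and only through Lemma~\ref{lem:sheafificationlex} is its image under $i^*$ identified with the Joyal homotopy pullback appearing in the corollary. Everything else is a routine consequence of the fact that $i_*$ is limit-preserving and participates in a Quillen equivalence whose left adjoint $i^*$ already preserves weak equivalences on the nose.
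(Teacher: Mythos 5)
Your proof is correct and follows essentially the same route as the paper: transport the data along the limit-preserving embedding $i_*$, apply Theorem \ref{thm:quillenb} to the resulting action in presheaves, then apply $i^*$ (which preserves and detects $\lambda$-equivalences) and identify $i^*$ of the presheaf homotopy pullback with the Joyal homotopy pullback via Lemma \ref{lem:sheafificationlex}. The only difference is that you spell out the verification of the hypotheses for the transported action (local fibration via $i^*i_*\cong\mm{id}$, action by $\lambda$-equivalences via detection by $i^*$), which the paper leaves implicit.
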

\begin{proof}
Form the homotopy pullback of simplicial presheaves
$$\xymatrix{
Q\ar[r]\ar[d] & i_*\big(BX_{\cat{C}})\ar[d]\\
i_*\big(S\times \Delta[0]\big)\ar[r] & i_*(B\cat{C}).
}$$
The left Bousfield localization $\mm{sSh}(\cat{S}, J)_\lambda$ is Quillen equivalent to the left Bousfield localization $\mm{sPSh}(\cat{S})_{J, \lambda}$ and the map $i_*X(c)\rt Q$ is a $\lambda$-equivalence of simplicial presheaves. It follows that $X(c)\rt i^*Q$ is a $\lambda$-equivalence of simplicial sheaves, so that the result follows from Lemma \ref{lem:sheafificationlex}.
\end{proof}
\begin{proof}[Proof (of Theorem \ref{thm:quillenb})]
We follow the strategy from \cite{moe89}. The square \eqref{diag:quillensquare} in the theorem is obtained by applying the diagonal functor $\delta^*$ to the pullback square of bisimplicial presheaves
\begin{equation}\label{diag:quillenbsquarebeforereal}\vcenter{\xymatrix{
X(c)\ar[d]\ar[r] & N(X_{\cat{C}})\ar[d]\\
S\times\Delta[0]\ar[r] & N(\cat{C}).
}}\end{equation}
Here $X(c)$ and $S\times\Delta[0]$ are considered as bisimplicial presheaves which are constant in one simplicial direction. It thus suffices to prove the theorem for the homotopy pullback of \eqref{diag:quillenbsquarebeforereal} in $\mm{bisPSh}(\cat{S})$. This homotopy pullback can be formed by factoring the map $S\times\Delta[0]\rt N(\cat{C})$ as a trivial cofibration, followed by a fibration and then taking the pullback of $N(X_{\cat{C}})\rt N(\cat{C})$ along that fibration.

Such a factorization is obtained in the standard way from the small object argument, as a transfinite composition of pushouts of generating trivial cofibrations, i.e.\ maps $T\times \delta_!\Lambda^k[n]\rt T\times \delta_!\Delta[n]$ for any object $T\in\cat{S}$. Since pulling back along a map commutes with colimits in bisimplicial presheaves, it thus suffices to show that for any pullback diagram of the form
$$\vcenter{\xymatrix{
X_{\sigma i}\ar@{^{(}->}[r]\ar[d] & X_\sigma\ar[r]\ar[d] & N(X_{\cat{C}})\ar[d]\\
T\times \delta_!\Lambda^k[n]\ar@{^{(}->}[r]_i & T\times\delta_!\Delta[n] \ar[r] & N\cat{C}
}}$$
(where $i$ denotes the inclusion), the map $X_{\sigma i}\rt X_\sigma$ becomes a $\lambda$-equivalence after applying $\delta^*$. Indeed, then the map $\delta^*(X_{\sigma i})\rt \delta^*(X_\sigma)$ becomes a trivial cofibration in the $\lambda$-localization of the \emph{injective} model structure, and a transfinite composition of pushouts of these remains a $\lambda$-equivalence. 

Let us explicitly spell out the bisimplicial presheaves $X_\sigma$ and $X_{\sigma i}$. The map $\sigma\colon T\times\delta_!\Delta[n]\rt N\cat{C}$ is a string of morphisms
$$\xymatrix{
\sigma = \Big( c_0\ar[r]^{\sigma_1} & c_1\ar[r] & \dots \ar[r]^{\sigma_n} & c_n\Big)
 }$$ 
 in the category $\cat{C}(T)_n$. For any object $R$ in the site $\cat{S}$, an element of the set $X_\sigma(R)_{p, q}$ is a quadruple
 $$
 \big(f, \alpha, \beta, x\big)
 $$
 where $f\colon R\rt T$ is a map in $\cat{S}$, $\alpha$ and $\beta$ are maps in $\Del$
 $$\xymatrix{
 \alpha\colon [p]\ar[r] & [n] & \beta\colon [q]\ar[r] & [n]
 }$$
 and $x\in X(R)_q$ is an element whose image under $\pi\colon X\rt \mm{ob}(\cat{C})$ satisfies
 $$
 \pi(x) = \beta^*\big(c_{\alpha(0)}\cdot f\big).
 $$
 An object of $X_{\sigma i}$ is a similar quadruple $(f, \alpha, \beta, x)$ satisfying the additional condition that there is some $l=0, \dots, \hat{k}, \dots, n$ such that $\alpha$ and $\beta$ both miss $l$.
 
 Now consider the bisimplicial presheaves $X_\sigma^0$ and $X_{\sigma i}^0$ whose $(p, q)$-simplices at $R$ are quadruples $(f, \alpha, \beta, x)$ exactly as before, except that we require
 $$
 \pi(x) = \beta^*(c_0\cdot f)\in \mm{ob}(\cat{C})(R)_q
 $$
 (so $c_0$ instead of $c_{\alpha(0)}$). These bisimplicial presheaves fit into a commuting square
 $$\xymatrix{
 X^0_{\sigma i}\ar[d]_{\ol{\sigma}_*}\ar[r] & X^0_\sigma\ar[d]^{\ol{\sigma}_*}\\
 X_{\sigma i}\ar[r] & X_\sigma
 }$$
 where the vertical maps $\ol{\sigma}_*=(\sigma_{\alpha(0)}\circ \dots \circ \sigma_1)_*$ are induced by the action of $\cat{C}$ on $X$. The top inclusion $X^0_{\sigma i}\rt X^0_\sigma$ fits into a pullback diagram of bisimplicial presheaves
 $$\xymatrix{
 X^0_{\sigma i}\ar[d]\ar[r] & X^0_\sigma\ar[d]\ar[r] & X(c_0)\ar[r]\ar[d] & X\ar[d]\\
 T\times\delta_!(\Lambda^k[n])\ar[r] & T\times \delta_!\Delta[n]\ar[r] & T\times \Delta[n]\ar[r] & \mm{ob}(\cat{C})
 }$$
 where all objects in the most right square are constant in one simplicial direction (the $p$-direction, in the above notation). Since the diagonal functor $\delta^*$ preserves limits, it follows that $\delta^*(X^0_{\sigma i})\rt \delta^*(X^0_\sigma)$ is the pullback of a (local) weak equivalence along the (local) fibration $X\rt \mm{ob}(\cat{C})$. Lemma \ref{lem:pullbackalonglocalfib} then implies that $\delta^*(X^0_{\sigma i})\rt \delta^*(X^0_\sigma)$ is a (local) weak equivalence as well.
 
 To finish the proof, it remains to verify that the two vertical maps $\ol{\sigma}_*$ induce $\lambda$-equivalences on the diagonals. But for a fixed $p$, the action map $\ol{\sigma}_*\colon X_\sigma^0\rt X_{\sigma}$ is a coproduct over $\alpha\colon [p]\rt [n]$ of maps of the form
 $$\xymatrix{
 X(c_0)\ar[rr]\ar[rd] & & X(c_{\alpha(0)})\ar[ld]\\
 & T\times \Delta[n].
 }$$
 These maps are all $\lambda$-equivalences of simplicial presheaves by assumption, so the induced map on diagonals is a $\lambda$-equivalence by Lemma \ref{lem:realizationlemma}.
\end{proof}

\section{Examples}\label{sec:ex}
\begin{example}[Quillen's Theorem B]\label{ex:quillenb}
Let $f\colon \cat{D}\rt \cat{C}$ be a functor between categories. Let $X_c=N(f/c)$ be the nerve of the comma category $f/c$ for $c\in\cat{C}$. These $X_c$ form a covariant diagram of simplicial sets indexed by $\cat{C}$. The category $\cat{C}$ acts by weak equivalences on this diagram if for each $\alpha\colon c\rt c'$ in $\cat{C}$, the functor $f/c\rt f/c'$ induces a weak equivalence on nerves.

As a very special case of Theorem \ref{thm:quillenb}, we find that if this is the case, then $X_c$ is the homotopy fibre of
$$\xymatrix{
\hocolim X\ar[r] & B\cat{C}.
}$$
The space $\hocolim X$ is the nerve of the category $f/\cat{C}$ and the spaces $X_c$ are the nerves of the fibres of the functor $f/\cat{C}\rt \cat{C}$ \cite{tho79}.

There is an inclusion $\cat{D}\rt f/\cat{C}$ sending $d$ to $(d, f(d) \stackrel{=}{\rt} f(d))$, which is left adjoint to the obvious projection $f/\cat{C}\rt \cat{D}$. This functor induces a homotopy equivalence on nerves, so that the map $\hocolim X\rt B\cat{C}$ is homotopy equivalent to the map $B\cat{D}\rt B\cat{C}$. We therefore obtain Quillen's original Theorem B, identifying the homotopy fibre of $B\cat{D}\rt B\cat{C}$ over $c\in\cat{C}$ with the nerve of $f/c$.

Theorem \ref{thm:quillenb} gives an extension to localizations (e.g., to the case where each $X_c\rt X_{c'}$ is a homology isomorphism), as well as to functors $\cat{D}\rt \cat{C}$ between (pre)sheaves of categories on a site $(\cat{S}, J)$.
\end{example}
\begin{example}[Homotopy colimits and Puppe's theorem]\label{ex:puppe}
Let ${\mathcal{I}}$ be a small category and let $X$ and $Y$ be two ${\mathcal{I}}$-indexed diagrams of simplicial sets. A natural transformation $f\colon Y\rt X$ is called \emph{homotopy cartesian} if for any morphism $\alpha\colon i\rt j$ in ${\mathcal{I}}$, the naturality square
\begin{equation}\label{diag:cartesiantransf}\vcenter{\xymatrix{
Y_i\ar[r]^{\alpha_*}\ar[d]_{f_i} & Y_j\ar[d]^{f_j}\\
X_i\ar[r]_{\alpha_*} & X_j
}} 
\end{equation}
is a homotopy pullback. Puppe's theorem \cite{pup74} states that for any homotopy cartesian transformation $f$ and any $i_0\in{\mathcal{I}}$, the square
\begin{equation}\label{diag:pullbackoncolim}\vcenter{\xymatrix{
Y_{i_0}\ar[r]\ar[d] & \hocolim Y_i\ar[d]\\
X_{i_0}\ar[r] & \hocolim X_i
 }}\end{equation}
is a homotopy pullback. This theorem is in fact a special case of Theorem \ref{thm:quillenb} (for the trivial site, so for simplicial \emph{sets} rather than simplicial (pre)sheaves). Indeed, let $\cat{C}$ be the simplicial category $X_{{\mathcal{I}}}$ with space of objects $\tilde{X}=\coprod_{i\in{\mathcal{I}}} X_i$ and space of morphisms
$$
\mm{mor}(X_{{\mathcal{I}}}) = \coprod_{i\rt j} X_i.
$$
The natural transformation $f$  defines an action of $X_{{\mathcal{I}}}$ on $\tilde{Y}=\coprod_{i\in {\mathcal{I}}} Y_i$.

After replacing $Y\rt X$ by a fibration in the projective model structure on $\sSet^{{\mathcal{I}}}$, the hypothesis on the squares \eqref{diag:cartesiantransf} mean precisely that $X_{{\mathcal{I}}}$ acts by weak homotopy equivalences. The space $BX_{{\mathcal{I}}}$ is a model for $\hocolim_i X_i$, and Theorem \ref{thm:quillenb} gives for this special case that \eqref{diag:pullbackoncolim} is a homotopy pullback.

Still working on the trivial site, Theorem \ref{thm:quillenb} gives variations of Puppe's theorem for left Bousfield localizations. For example, suppose that all the squares \eqref{diag:cartesiantransf} are ``homology cartesian'', in the sense that for each vertex $x\in X_i$, the map from the homotopy fibre of $f_i$ over $x$ to the one of $f_j$ over $\alpha_*(x)$ is a homology equivalence. Then the map from $Y_{i_0}$ to the homotopy pullback inscribed in \eqref{diag:pullbackoncolim} is also a homology equivalence.

For a left Bousfield localization $\lambda$ of the model category $\mm{sPSh}(\cat{S})_J$ or $\mm{sSh}(\cat{S}, J)$, we obtain a similar result for a map $f\colon Y\rt X$ between ${\mathcal{I}}$-diagrams of simplicial (pre)sheaves: Theorem \ref{thm:quillenb} states that the map
$$\xymatrix{
Y_{i_0}\ar[r] & X_{i_0}\times^h_{\hocolim X_i} \hocolim Y_i
}$$
(homotopy pullback in the non-localized model structure) is a $\lambda$-weak equivalence whenever the map between homotopy fibres
$$\xymatrix{
\mm{hofib}(Y_i)_x\ar[r] & \mm{hofib}(Y_j)_{\alpha_*(x)}
}$$
is a $\lambda$-weak equivalence for each $i\in{\mathcal{I}}$ and each vertex $x\in X_i(S)$. If the localization $\lambda$ is left exact, then Theorem \ref{thm:quillenb} translates into the statement that if each square \eqref{diag:cartesiantransf} is homotopy cartesian in the $\lambda$-localized model structure, then so is each pullback square \eqref{diag:pullbackoncolim}. This is a version of Puppe's theorem for $\infty$-toposes, which is also referred to as \emph{descent}, cf.\ \cite{rez98} or \cite[Chapter 6.1.3]{lur09}. 
\end{example}
\begin{example}[Grouplike monoids]\label{ex:grouplikemonoids}
Let $(\cat{S}, J)$ be a site and $M$ a presheaf of simplicial monoids on $\cat{S}$. Then $M$ acts on itself by left multiplication and we obtain a pullback square
$$\vcenter{\xymatrix{
M\ar[r]\ar[d] & B(M_M)\ar[d]\\
\ast\ar[r] & B(M).
}}$$
The simplicial presheaf $B(M_M)$ is contractible, since the unit element is an initial object of the simplicial category $M_M$. For $S\in\cat{S}$ and $m\in M_0(S)$, left multiplication determines a map $m_*\colon M_{/S}\rt M_{/S}$, where $M_{/S}=S\times M$. If each such $m_*$ is a $\lambda$-equivalence, then it follows from Theorem \ref{thm:quillenb} (and Lemma \ref{lem:actingbyweonvertices}) that
$$\xymatrix{
M\ar[r] & \Omega B M
}$$
is a $\lambda$-equivalence as well. 

There is often a more familiar criterion for the above condition, in terms of the sheaf $\pi_0^\lambda(M)$ associated to the presheaf
$$\xymatrix{
\cat{S}^\op\ar[r] & \Set; \hspace{4pt} S\ar@{|->}[r] & \Hom_{\ho(\mm{sPSh}(\cat{S})_{J, \lambda})}\big(S, M\big).
}$$
To state this criterion, let us assume that for any $\lambda$-equivalence $X\rt X'$ between simplicial presheaves and any $S\in \cat{S}$, the map $X\times S\rt X'\times S$ is a $\lambda$-equivalence. This holds in various cases, e.g.\ for $\infty$-toposes (cf.\ Example \ref{ex:puppe}) and for $\field{A}^1$-model structures \cite{mor99} (cf.\ Example \ref{ex:pinfinity}). It follows that $f\times g\colon X\times Y\rt X'\times Y'$ is a $\lambda$-equivalence if $f$ and $g$ are. In particular, if $M\rt M'$ is an (injectively) fibrant replacement of $M$ in $\mm{sPSh}(\cat{S})_{J, \lambda}$, then $M'$ inherits a multiplication $\mu'$ via
$$\xymatrix{
M\times M\ar[d]_\sim \ar[r]^-\mu & M\ar[d]^\sim\\
M'\times M'\ar@{..>}[r]_-{\mu'} & M'.
}$$
This is unital and associative up to homotopy, so that homotopy classes of maps into $M'$ form a monoid and $\pi_0^\lambda(M)$ is a sheaf of monoids. \emph{The map $M\rt \Omega BM$ is a $\lambda$-equivalence whenever $\pi_0^\lambda(M)$ is a sheaf of groups.}

To see this, take $S\in\cat{S}$ and $m\in M_0(S)$, with image $m'$ in $M'_0(S)$. To see that $m_*\colon M_{/S}\rt M_{/S}$ is a $\lambda$-equivalence, it suffices to verify that $m'_*\colon M'_{/S}\rt M'_{/S}$ is a $\lambda$-equivalence. Because $\pi_0^\lambda(M)$ is a sheaf of groups, there is a cover $\alpha_i\colon S_i\rt S$ such that each $m'_i=\alpha_i^*(m')$ admits a homotopy inverse $n_i\in M'_0(S_i)$. It follows that each
$$\xymatrix{
(m_i')_*\colon M'_{/S_i}\ar[r] & M'_{/S_i}
}$$
is a homotopy equivalence. Similarly, the restriction of $m'$ to an iterated pullback $S_{i_0\dots i_n}$ admits a homotopy inverse and $(m'_{i_0\dots i_n})_*$ is a homotopy equivalence as well. These weak equivalences assemble into a natural weak equivalence of bisimplicial presheaves
$$\xymatrix@R=1.2pc{
\coprod\limits_{i_0\dots i_n} M'_{/S_{i_0\dots i_n}}\ar[rr]^\sim\ar[rd] & & \coprod\limits_{i_0\dots i_n} M'_{/S_{i_0\dots i_n}}\ar[ld]\\
& \coprod\limits_{i_0\dots i_n} S_{i_0\dots i_n}.
}$$
The realization of this natural weak equivalence is weakly equivalent to the map $m'_*\colon M'_{/S}\rt M'_{/S}$ over $S$ (for instance by Puppe's theorem, cf.\ Example \ref{ex:puppe}), so that $m'_*$ and $m_*$ are $\lambda$-equivalences.
\end{example}
\begin{example}[Infinite projective space]\label{ex:pinfinity}
Consider a site $(\cat{S}, J)$ endowed with a sheaf of commutative rings $R$ and let us use $\field{A}^1$ to denote the sheaf of sets underlying $R$. Let $\mm{sPSh}(\cat{S})_{J, \field{A}^1}$ be the left Bousfield localization at all projection maps
$$\xymatrix{
X\times \field{A}^1\ar[r] & X.
}$$
This model category describes `$\field{A}^1$-homotopy theory over $R$'. In particular, two maps $f, g\colon X\rt Y$ describe the same map in the homotopy category of $\mm{sSh}(\cat{S}, J)_{\field{A}^1}$ if there exists an $\field{A}^1$-homotopy
$$\xymatrix{
H\colon X\times \field{A}^1\ar[r] & Y
}$$
such that $H\big|_{X\times \{0\}}=f$ and $H\big|_{X\times \{1\}}=g$.

Let $\field{G}_m\subseteq \field{A}^1$ be the sub-presheaf of invertible elements and let $\field{A}^{n+1}_*$ be the union
$$
\bigcup_{i=0}^{n} \field{A}^i\times \field{G}_m \times \field{A}^{n-i} \subseteq \field{A}^{n+1}.
$$
The presheaf $\field{G}_m$ is a presheaf of groups under multiplication, which acts on $\field{A}^{n+1}_*$ via
$$\xymatrix{
\field{G}_m\times \field{A}^{n+1}_\ast\ar[r] & \field{A}^{n+1}_\ast; \hspace{4pt} z\cdot (x_0, \dots, x_n) = (zx_0, \dots, zx_n).
}$$
This action is free, with quotient $\field{P}^n=\field{A}^{n+1}_*/\field{G}_m$ given by the $n$-th projective space. The projective spaces fit into a sequence
$$\xymatrix@C=4pc{
\dots\ar[r]^{x\mapsto (x, 0)} & \field{A}^{n+1}_\ast\ar[r]^{x\mapsto (x, 0)}\ar[d] & \field{A}^{n+2}_\ast\ar[d]\ar[r]^{x\mapsto (x, 0)} & \dots \ar[r] & \field{A}^\infty_\ast\ar[d]^q\\
\dots\ar[r]_-{[x]\mapsto [x: 0]} & \field{P}^n\ar[r]_-{[x]\mapsto [x:0]} & \field{P}^{n+1}\ar[r]_-{[x]\mapsto [x:0]} & \dots \ar[r] & \field{P}^\infty
}$$
whose colimit $\field{P}^\infty$ is the quotient of the colimit $\field{A}^\infty_*$ by the (free) action of $\field{G}_m$ given by $z\cdot (x_0, \dots, x_n, 0, \dots)= (zx_0,\dots, zx_n, 0, \dots)$.

The presheaf $\field{A}^\infty_\ast$ can be identified with the presheaf of polynomials with coefficients in $R$, with at least one invertible coefficient. Multiplication of polynomials then endows $\field{A}^\infty_\ast$ and its quotient $\field{P}^\infty$ with the structure of a commutative monoid. Let us use the criterion of Example \ref{ex:grouplikemonoids} to verify that
$$\xymatrix{
\field{P}^\infty\ar[r] & \Omega B(\field{P}^\infty)
}$$
is an $\field{A}^1$-weak equivalence. In fact, $\pi_0^{\field{A}^1}(\field{P}^\infty)$ is the terminal sheaf: for any $S\in \field{S}$ and any point $[x]=[x_0:\ldots : x_n:0 :\ldots]\in \field{P}^\infty(S)$, there are $\field{A}^1$-paths
\begin{align*}
[x_t] &= [x_0: x_1: \ldots : x_n : t : 0: \ldots]\\
[y_t] &= [tx_0: tx_1:\ldots : tx_n : 1 : 0:\ldots ]\\
[z_t] &= [t : 0 : \ldots : 0: 1: 0:\ldots]\\
[w_t] &= [1: 0 : \ldots : 0: t:0:\ldots]
\end{align*}
connecting the point $[x]$ to the unit element $[1]=[1: 0: 0:\dots]$ of $\field{P}^\infty$. Identifying $\field{A}^1$-homotopic elements in $\pi_0(\field{P}^{\infty})$ therefore yields the terminal sheaf, which implies that $\pi_0^{\field{A}^1}(\field{P}^\infty)$ is terminal as well (by \cite[Corollary 3.22]{mor99}).
\end{example}
Examples \ref{ex:groupcompletion} and \ref{ex:bglr} and Variant \ref{var:variantofgpcompl} generalize the classical argument of the group completion theorem (see \cite{mcd75,qui71}) to Bousfield localizations of simplicial (pre)sheaves. We only describe the case of simplicial sheaves, the case of simplicial presheaves being completely analogous.
\begin{example}[Group completion]\label{ex:groupcompletion}
Let $(\cat{S}, J)$ be a site and consider the functor
\begin{equation}\label{diag:homologysheaves}\xymatrix{
h_\ast\colon \mm{sSh}(\cat{S}) \ar[r] & \mm{Sh}(\cat{S}; \mm{Ab}_\mm{gr})
}\end{equation}
sending each simplicial sheaf $X$ to its homology sheaves, i.e.\ the associated sheaves of the presheaves $H_*(X(-); \field{Z})$. This functor has the following properties:
\begin{enumerate}[leftmargin=*]
\item It sends local weak equivalences to isomorphisms of sheaves of graded abelian groups.
\item If $X\colon \cat{I}\rt \mm{sSh}(\cat{S})$ is a filtered diagram of simplicial sheaves, then the natural map 
$$\xymatrix{
\colim h_\ast(X_i)\ar[r] & h_\ast(\hocolim X_i)
}$$
is an isomorphism. 
\item Let $X$ and $Y$ be two $\cat{I}$-indexed diagrams of simplicial sheaves and let $f\colon X\rt Y$ be a natural transformation between them. If each $h_*(X_i)\rt h_*(Y_i)$ is an isomorphism, then the map $h_*(\hocolim X)\rt h_*(\hocolim Y)$ is an isomorphism.
\item It is lax symmetric monoidal, i.e.\ there are natural maps 
$$\xymatrix{
h_*(X)\otimes h_*(Y)\ar[r] & h_*(X\times Y) & \field{Z}\rt h_*(\ast)
}$$
where $\otimes$ denotes the usual tensor product of sheaves of graded abelian groups. In particular, $h_*$ sends simplicial monoids to graded rings.
\item $h_*$ is part of an indexed functor in the following sense. For every sheaf (of sets) $S$, its category of elements $\cat{S}/S$ inherits a natural Grothendieck topology from $(\cat{S}, J)$. As in \eqref{diag:homologysheaves}, there is a functor $(h_{*})_{/S}$ taking the homology of simplicial sheaves over $\cat{S}/S$, which satisfies conditions (1) - (4). For any map of sheaves $f\colon S\rt T$, these functors fit into a square which commutes up to natural isomorphism
$$\xymatrix@C=3pc{
\mm{sSh}(\cat{S}/T) \ar[r]^-{(h_*)_{/T}}\ar[d]_{f^*} & \mm{Sh}(\cat{S}/T; \mm{Ab}_\mm{gr})\ar[d]^{f^*}\\
\mm{sSh}(\cat{S}/S) \ar[r]_-{(h_*)_{/S}} & \mm{Sh}(\cat{S}/S; \mm{Ab}_\mm{gr}).
}$$
Here $f^*$ restricts a (simplicial) sheaf along the functor $\cat{S}/S\rt \cat{S}/T$.
\end{enumerate}
Conditions (1) - (3) imply that there exists a left Bousfield localization $\mm{sSh}(\cat{S}, J)_{h_*}$ of the Joyal model structure whose weak equivalences are the $h_*$-isomorphisms (cf.\ the appendix of \cite{bou75}). Condition (5) expresses the local nature of the functor $h_*$; for example, it implies that there is a natural map of sheaves $\pi_0(X)\rt h_0(X)$.

Let $M$ be a sheaf of simplicial monoids on $\cat{S}$ and suppose that $M$ admits a countable set of global sections $m_i\colon \ast\rt M$ such that the map 
$$\xymatrix{
(m_i)_{i\in \field{N}}\colon \field{N}\ar[r] & M
}$$
induces a surjection on $\pi_0$-sheaves. In this case, the \emph{group completion theorem} asserts that the map
$$\xymatrix{
h_*(M)[\pi_0(M)^{-1}]\ar[r] & h_*(\Omega BM)
}$$
is an isomorphism if the sheaf $\pi_0(M)$ is contained in the center of $h_*(M)$.

To see this, let $M_s$ denote the simplicial sheaf obtained as the (homotopy) colimit of the sequence of right multiplication maps
\begin{equation}\label{diag:localizationsequence}\vcenter{\xymatrix@C=3pc{
M\ar[r]^{(-)\cdot m_{i_1}} & M\ar[r]^{(-)\cdot m_{i_2}}\ar[r] & \dots
}}\end{equation}
where each $m_i$ occurs infinitely many times. It follows that
$$
h_*(M_s)\cong \colim\Big(\xymatrix{h_*(M)\ar[r]^{m_{i_1}} & h_*(M)\ar[r]^{m_{i_2}} & h_*(M)\ar[r] & \dots}\Big).
$$
Because $\pi_0(M)$ is contained in the center of $h_*(M)$, this colimit has the structure of an associative algebra. Since every local section of $\pi_0(M)$ agrees with the restriction of some global section $m_i$, we have that
$$
h_*(M_s)\cong h_*(M)[\pi_0(M)^{-1}].
$$ 
It therefore suffices to provide an $h_*$-isomorphism $M_s\rt \Omega BM$. To do this, note that left multiplication turns \eqref{diag:localizationsequence} into a sequence of left $M$-modules, so that $M_s$ is a left $M$-module as well. We obtain a pullback square of simplicial sheaves
$$\vcenter{\xymatrix{
M_s\ar[r]\ar[d] & B((M_s)_M)\ar[d]\\
\ast\ar[r] & BM.
}}$$
The simplicial sheaf $B((M_s)_M)$ is weakly contractible, being a filtered colimit of simplicial sheaves $B(M_M)$ (see Example \ref{ex:grouplikemonoids}). By Theorem \ref{thm:quillenb}, the map $M_s\rt \Omega BM$ is an $h_*$-isomorphism if $M$ acts on $M_s$ by $h_*$-isomorphisms.

To see that $M$ acts on $M_s$ by $h_*$-isomorphisms, we can use (5) to work locally. Given an element $m\colon S\times\Delta[0]\rt M$, we may therefore assume that $m$ is homotopic to one of the global elements $m_i\colon \ast\rt M$, restricted to $S$. Then $m$ acts by $h_*$-isomorphisms as soon as $m_i$ acts by $h_*$-isomorphisms on $M_\infty$. The map
$$\xymatrix@C=3pc{
h_*(M_s)\cong h_*(M)[\pi_0(M)^{-1}]\ar[r]^{m_i\cdot(-)} & h_*(M)[\pi_0(M)^{-1}] \cong h_*(M_s)
}$$
arises from left multiplication by $m_i$ in $h_*(M)$, which becomes an isomorphism on $h_*(M)[\pi_0(M)^{-1}]$ by construction.
\end{example}
\begin{example}\label{ex:bglr}
Suppose that $R$ is a sheaf of commutative rings on $\cat{S}$. For each $n$, let $\mm{GL}_n(R)\subseteq R^{n\times n}$ be the subsheaf of matrices with invertible determinant. Consider the monoid $M=\coprod_n B\mm{GL}_n(R)$ whose multiplication is induced by the block sum of matrices $\mm{GL}_n(R)\times \mm{GL}_m(R)\rt \mm{GL}_{n+m}(R)$. There is an isomorphism of simplicial sheaves
$$
M_s \cong \field{Z}\times B\mm{GL}_\infty(R) := \field{Z}\times \colim\Big(\hspace{-2pt}\xymatrix{B\mm{GL}_1(R)\ar[r]^-{(-)\oplus 1} & B\mm{GL}_2(R)\ar[r]^-{(-)\oplus 1} & \dots}\Big)
$$
because $\pi_0(M)\cong\field{N}$ is generated by a single element $1$. The group completion theorem of Example \ref{ex:groupcompletion} now asserts that the map
$$\xymatrix{
\field{Z}\times B\mm{GL}_\infty(R) \ar[r] & \Omega B(M)
}$$
induces an isomorphism on homology sheaves.
\end{example}
\begin{variant}\label{var:variantofgpcompl}
The same argument applies when the (integral) homology functor $h_*$ is replaced by any other functor 
$$\xymatrix{
E_*\colon \mm{sSh}(\cat{S})\ar[r] & \mm{Sh}\big(\cat{S}; \mm{Mod}^\mm{gr}_{E_*(*)}\big)
}$$
which takes values in sheaves of graded modules over a sheaf of graded-commutative rings $E_*(*)$ and satisfies conditions (1) - (5) above.
\end{variant}

\bibliographystyle{abbrv}
\bibliography{bibliography_quillen_b}

\begin{thebibliography}{10}

\bibitem{bou75}
A.~K. Bousfield.
\newblock The localization of spaces with respect to homology.
\newblock {\em Topology}, 14:133--150, 1975.

\bibitem{bro73}
K.~S. Brown.
\newblock Abstract homotopy theory and generalized sheaf cohomology.
\newblock {\em Trans. Amer. Math. Soc.}, 186:419--458, 1973.

\bibitem{sga4}
P.~Deligne.
\newblock {\em Appendix to Th\'eorie des topos et cohomologie \'etale des
  sch\'emas II, Exp. VI.}
\newblock Lecture Notes in Mathematics, Vol. 270. Springer-Verlag, Berlin-New
  York, 1972.
\newblock S\'eminaire de G\'eom\'etrie Alg\'ebrique du Bois-Marie 1963--1964
  (SGA 4), Dirig\'e par M. Artin, A. Grothendieck et J. L. Verdier. Avec la
  collaboration de N. Bourbaki, P. Deligne et B. Saint-Donat.

\bibitem{har80}
B.~Harris.
\newblock Bott periodicity via simplicial spaces.
\newblock {\em J. Algebra}, 62(2):450--454, 1980.

\bibitem{jar87}
J.~F. Jardine.
\newblock Simplicial presheaves.
\newblock {\em J. Pure Appl. Algebra}, 47(1):35--87, 1987.

\bibitem{joy84}
A.~Joyal.
\newblock Letter to {A}lexander {G}rothendieck.
\newblock 1984.

\bibitem{lur09}
J.~Lurie.
\newblock {\em Higher topos theory}, volume 170 of {\em Annals of Mathematics
  Studies}.
\newblock Princeton University Press, Princeton, NJ, 2009.

\bibitem{mak77}
M.~Makkai and G.~E. Reyes.
\newblock {\em First order categorical logic}.
\newblock Lecture Notes in Mathematics, Vol. 611. Springer-Verlag, Berlin-New
  York, 1977.

\bibitem{mcd75}
D.~McDuff and G.~Segal.
\newblock Homology fibrations and the ``group-completion'' theorem.
\newblock {\em Invent. Math.}, 31(3):279--284, 1975/76.

\bibitem{moe89}
I.~Moerdijk.
\newblock Bisimplicial sets and the group-completion theorem.
\newblock In {\em Algebraic {$K$}-theory: connections with geometry and
  topology ({L}ake {L}ouise, {AB}, 1987)}, volume 279 of {\em NATO Adv. Sci.
  Inst. Ser. C Math. Phys. Sci.}, pages 225--240. Kluwer Acad. Publ.,
  Dordrecht, 1989.

\bibitem{mor99}
F.~Morel and V.~Voevodsky.
\newblock {${\bf A}^1$}-homotopy theory of schemes.
\newblock {\em Inst. Hautes \'Etudes Sci. Publ. Math.}, (90):45--143 (2001),
  1999.

\bibitem{pup74}
V.~Puppe.
\newblock A remark on ``homotopy fibrations''.
\newblock {\em Manuscripta Math.}, 12:113--120, 1974.

\bibitem{qui73}
D.~Quillen.
\newblock Higher algebraic {$K$}-theory. {I}.
\newblock pages 85--147. Lecture Notes in Math., Vol. 341, 1973.

\bibitem{qui71}
D.~Quillen.
\newblock On the group completion of a simplicial monoid, {A}ppendix {Q} to
  `{F}iltrations on the homology of algebraic varieties' by {E}. {F}riedlander
  and {B}. {M}azur.
\newblock {\em Mem. Amer. Math. Soc.}, 110(529):x+110, 1994.
\newblock (MIT preprint 1971).

\bibitem{rez98}
C.~Rezk.
\newblock Fibrations and homotopy colimits of simplicial sheaves.
\newblock {\em
  \href{https://arxiv.org/abs/math/9811038}{\textup{arXiv:math/9811038}}},
  1998.

\bibitem{tho79}
R.~W. Thomason.
\newblock Homotopy colimits in the category of small categories.
\newblock {\em Math. Proc. Cambridge Philos. Soc.}, 85(1):91--109, 1979.

\end{thebibliography}

\end{document}